\documentclass[11pt]{amsart}

\author{Carlo Sanna}
\address{Universit\`a degli Studi di Torino\\Department of Mathematics\\Turin, Italy}
\email{carlo.sanna.dev@gmail.com}

\keywords{Dirichlet convolution, asymptotic density, natural density, M\"{o}bius inversion, M\"{o}bius transform, uncertainty principle, sets of multiples}
\subjclass[2010]{Primary: 11A25, Secondary: 11N37}
\title[On the asymptotic density of the support of a Dirichlet convolution]{\large On the asymptotic density of the support of a Dirichlet convolution}

\usepackage{amsmath}
\usepackage{amssymb}
\usepackage{amsthm}
\usepackage{geometry}
\geometry{left=1.15in, right=1.15in, top=.72in, bottom=.72in}
\usepackage{lgreek}
\usepackage{color}
\usepackage{hyperref}
\hypersetup{colorlinks=true}

\DeclareMathAlphabet{\curly}{U}{rsfs}{m}{n}

\newtheorem{thm}{Theorem}[section]
\newtheorem{cor}{Corollary}[section]
\newtheorem{lem}[thm]{Lemma}
\theoremstyle{remark}
\newtheorem{rmk}{Remark}[section]

\begin{document}

\renewcommand{\labelenumi}{(\roman{enumi})}
\def\sumprime{\sideset{}{^{'}}{\sum}}
\def\lcm{\mathrm{lcm}}
\def\N{\mathbf{N}}
\def\ds{\mathbf{d}}
\def\supp{\mathrm{supp}}

\begin{abstract}
Let $\nu$ be a multiplicative arithmetic function with support of positive asymptotic density.
We prove that for any not identically zero arithmetic function $f$ such that $\sum_{f(n) \neq 0} 1 / n < \infty$, the support of the Dirichlet convolution $f * \nu$ possesses a positive asymptotic density.
When $f$ is a multiplicative function, we give also a quantitative version of this claim.
This generalizes a previous result of P. Pollack and the author, concerning the support of M\"obius and Dirichlet transforms of arithmetic functions.
\end{abstract}

\maketitle

\section{Introduction}

Let $f$ and $g$ be two arithmetic functions, i.e., functions from the set of positive integers to the set of complex numbers.
The Dirichlet convolution of $f$ and $g$ is the arithmetic function denoted by $f * g$ and defined as
\begin{equation*}
(f * g)(n) := \sum_{d \mid n} f(d) \, g(n/d) ,
\end{equation*}
for all positive integers $n$. 
The set of arithmetic functions together with Dirichlet convolution forms a commutative monoid with identity element $\epsilon$, the arithmetic function that satisfies $\epsilon(1) = 1$ and $\epsilon(n) = 0$ for all integers $n \geq 2$.
Furthermore, any arithmetic function $f$ has an inverse $f^{-1}$ with respect to the Dirichlet convolution if and only if $f(1) \neq 0$, in which case $f^{-1}$ can be computed recursively by the identities $f^{-1}(1) = 1 / f(1)$ and
\begin{equation*}
f^{-1}(n) = - \frac1{f(1)} \sum_{\substack{d \mid n \\ d < n}} f^{-1}(d)\,f(n / d), \quad n \geq 2 .
\end{equation*}
The Dirichlet transform $\hat{f}$ and the M\"obius transform $\check{f}$ of the arithmetic function $f$ are defined by $\hat{f} := f * 1$ and $\check{f} := f * \mu$, where $\mu$ is the M\"obius function.
Notably, the Dirichlet inverse of $\mu$ is the identically equal to $1$ arithmetic function.
Actually, this is the content of the well-know M\"obius inversion formula, that is $\check{\hat{f}} = \hat{\check{f}} = f$ (see \cite[Ch. 2]{chan09} for details).

We call $(f,g)$ a M\"obius pair if $f$ and $g$ are arithmetic function with $f = \hat{g}$, or equivalently $g = \check{f}$.
In a previous paper, P. Pollack and the author studied the asymptotic density of the support of functions $f$ and $g$ in a M\"obius pair $(f,g)$; by the support of an arithmetic function $h$ we mean the set of all positive integers $n$ such that $h(n) \neq 0$, we denote it with  $\supp(h)$.
They give the following result \cite[Theorem 1.1]{PS13}.

\begin{thm}\label{old_theorem}
Suppose that $(f,g)$ is a nonzero M\"obius pair. 
If $\supp(f)$ is thin then $\supp(g)$ possesses a positive asymptotic density. 
The same result holds with the roles of $f$ and $g$ reversed.
\end{thm}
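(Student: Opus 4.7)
The plan is to find, for some fixed $m \in \supp(f)$, a positive-density set of integers $n = mk$ on which the convolution $g = f * \mu$ reduces to a single nonzero term; the reverse implication will follow from the same template applied to $f = g * 1$. Because the M\"obius pair $(f,g)$ is nonzero, M\"obius inversion forces both $f$ and $g$ to be nonzero, so $\supp(f)$ is nonempty and we may pick $m \in \supp(f)$.

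For any $n = mk$ with $\gcd(k,m) = 1$ and $k$ squarefree, each divisor of $n$ factors uniquely as $dk'$ with $d \mid m$ and $k' \mid k$, so
\begin{equation*}
g(n) \;=\; \sum_{d \mid m}\sum_{k' \mid k} f(dk')\,\mu(m/d)\,\mu(k/k') .
\end{equation*}
The key observation is that if we restrict $k$ so that $dk' \notin \supp(f)$ whenever $(d,k') \neq (m,1)$, then only one term survives and $g(n) = f(m)\mu(k) \neq 0$, placing $n$ in $\supp(g)$. Equivalently, $k$ must have no divisor in the exceptional set
\begin{equation*}
E_m \;:=\; \{\, e \geq 2 : de \in \supp(f) \text{ for some } d \mid m \,\} .
\end{equation*}
A swap of summation yields $\sum_{e \in E_m} 1/e \leq \sigma(m)\sum_{s \in \supp(f)} 1/s < \infty$. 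A standard sieve argument (Heilbronn--Rohrbach, or the Davenport--Erd\H{o}s theory of sets of multiples) then implies that the set of squarefree $k$ coprime to $m$ with no divisor in $E_m$ has positive lower density; multiplying by $m$ produces a positive-density subset of $\supp(g)$.

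The reverse direction proceeds by the same template: if $\supp(g)$ is thin, pick $m \in \supp(g)$ and consider $n = mk$ with $\gcd(k,m) = 1$ such that no divisor of $n$ other than $m$ lies in $\supp(g)$; then the identity $f = g * 1$ collapses to $f(n) = g(m) \neq 0$. No squarefreeness hypothesis on $k$ is needed since there is no M\"obius factor to cancel. I expect the main obstacle to lie in the density estimate: one must convert the convergence of $\sum_{e \in E_m} 1/e$ into a genuine positive lower-density bound for the integers lacking every divisor in the (infinite) set $E_m$, a step that draws on the classical theory of sets of multiples rather than an elementary one-line bound via $\sum 1/e$.
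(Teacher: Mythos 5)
Your positivity idea is essentially the paper's, but two things need attention, one a fixable slip and one a genuine gap. The slip: with an arbitrary $m \in \supp(f)$ your collapse claim fails. Restricting $k$ only controls the terms with $k' \geq 2$ (your $E_m$ contains only $e \geq 2$); the terms with $k' = 1$ and $d$ a proper divisor of $m$ survive or not according to whether $f(d) \neq 0$, which has nothing to do with $k$. Under your restrictions one actually gets $g(mk) = \mu(k)\sum_{d \mid m} f(d)\,\mu(m/d) = \mu(k)\,g(m)$, which may well vanish. The repair is exactly the paper's choice: take $m$ to be the \emph{least} element of $\supp(f)$, so that no proper divisor of $m$ lies in $\supp(f)$ and indeed $g(mk) = f(m)\mu(k)$. (Your sieve step then goes through, e.g.\ by sieving against the thin set $E_m \cup \{p : p \mid m\} \cup \{p^2 : p \nmid m\}$, which does not contain $1$, so its set of multiples has density $< 1$; this is the same mechanism as the paper's Lemma on sieved sets.) The reverse direction has the same issue: you must take $m$ minimal in $\supp(g)$, not arbitrary.

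The genuine gap is that the theorem asserts $\supp(g)$ \emph{possesses} an asymptotic density, i.e.\ the limit $\ds(\supp(g))$ exists, and that it is positive; your argument delivers only a positive \emph{lower} density. Most of the work in the paper's proof is devoted precisely to existence: $\supp(g)$ is partitioned into equivalence classes according to the finite set $\curly{S}$ of divisors of $n$ lying in $\supp(f)$ together with the subset of those $d$ for which $n/d$ is squarefree; each class is shown to have a density by an inclusion--exclusion/sieve lemma, and the infinite union is controlled by the tail bound $\sum_{d \in \supp(f),\, d \geq m_k} 1/d \to 0$, which uses thinness of $\supp(f)$ again. Nothing in your proposal addresses this step, and it does not follow from the single-term construction (which only exhibits one positive-density subset of $\supp(g)$, leaving the upper density of the rest of $\supp(g)$ uncontrolled as far as existence of the limit is concerned). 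So as written the proof establishes a weaker statement; to prove the theorem as stated you need an additional argument of the above type for the existence of $\ds(\supp(g))$.
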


We recall that a set of positive integers $\curly{A}$ is said to be \emph{thin} if $\sum_{a \in \curly{A}} 1 / a < \infty$.
Given a multiplicative arithmetic function $\nu$, we extend the notion of M\"obius pair by saying that $(f,g)$ is a $\nu$-pair if $f$ and $g$ are arithmetic function with $g = f * \nu$, or equivalently $f = g * \nu^{-1}$. 
(Note that $\nu$ is Dirichlet invertible since as a multiplicative function it satisfies $\nu(1) = 1$).
Here, we prove the following generalization of Theorem \ref{old_theorem}.

\begin{thm}\label{main_theorem}
Let $\nu$ be a multiplicative arithmetic function with support of positive asymptotic density.
Suppose that $(f,g)$ is a nonzero $\nu$-pair. 
If $\supp(f)$ is thin then $\supp(g)$ possesses a positive asymptotic density.
\end{thm}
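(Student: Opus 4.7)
The plan is to single out a specific $a_0 \in \supp(f)$ with $g(a_0) \neq 0$ and produce, directly, a positive-density family of multiples $n = a_0 m$ on which the convolution $g(n)$ collapses, by a clean cancellation, to $g(a_0)\nu(m)$. The choice of $a_0$ is easy: take $a_0 = 1$ when $f(1) \neq 0$, and otherwise $a_0 := \min \supp(f)$. In the second case $f$ vanishes on every proper divisor of $a_0$, so
\[
g(a_0) = \sum_{d \mid a_0} f(d)\,\nu(a_0/d) = f(a_0)\,\nu(1) = f(a_0) \neq 0,
\]
and the first case is immediate from $g(1) = f(1)\nu(1)$.

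Next I would call an integer $n$ \emph{good} if $n = a_0 m$ with $\gcd(a_0, m) = 1$, $\nu(m) \neq 0$, and no element of $A^* := \supp(f) \setminus \{d : d \mid a_0\}$ divides $n$. Every divisor $d$ of such $n$ factors uniquely as $d = d_1 d_2$ with $d_1 \mid a_0$ and $d_2 \mid m$, and the avoidance condition forces $f(d_1 d_2) = 0$ whenever $d_2 > 1$ (else $d_1 d_2 \in A^*$ would divide $n$). Multiplicativity of $\nu$, applied to the coprime pair $(a_0/d_1, m)$, then collapses the convolution to
\[
g(n) = \sum_{d_1 \mid a_0} f(d_1)\,\nu(a_0/d_1)\,\nu(m) = g(a_0)\,\nu(m) \neq 0,
\]
so every good $n$ lies in $\supp(g)$.

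The remaining and principal task is to show that the good integers form a set of positive asymptotic density. Let $M := \{m : \gcd(m, a_0) = 1,\ \nu(m) \neq 0\}$. Positive density of $\supp(\nu)$ is exactly the convergence of the Euler product $\prod_p (1 - 1/p)\sum_{k : \nu(p^k) \neq 0} p^{-k}$ to a positive limit, and $M$ differs from $\supp(\nu)$ only by insisting that $v_p(m) = 0$ for the finitely many $p \mid a_0$; each of the finitely many affected local factors stays positive, so $\mathrm{dens}(M) > 0$ and $\{a_0 m : m \in M\}$ has density $\mathrm{dens}(M)/a_0 > 0$. It remains to remove the $n$'s for which some $a \in A^*$ divides $n$; for each $a$ compatible with $a_0$ (meaning $v_p(a) \leq v_p(a_0)$ for every $p \mid a_0$) the divisibility $a \mid a_0 m$ is equivalent to $b_a := a/\gcd(a,a_0) \mid m$, with $b_a \geq 2$ coprime to $a_0$ and $1/b_a \leq a_0/a$.

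The main obstacle will be this last density bookkeeping, because the naive union bound gives only $\sum_{a \in A^*} 1/b_a \leq a_0 \sum_{a \in \supp(f)} 1/a$, which is finite but may exceed $\mathrm{dens}(M)/a_0$. My remedy is a truncation: given $\varepsilon > 0$, use thinness of $\supp(f)$ to choose $K$ with $\sum_{a \in A^*,\,a > K} 1/a < \varepsilon$, and impose the finitely many local conditions ``$b_a \nmid m$'' for $a \in A^* \cap [1,K]$ directly into the definition of $M$, obtaining a subset $M_K \subseteq M$. A finite-variant Euler-product argument shows $\mathrm{dens}(M_K) > 0$: only finitely many new local constraints are added, and none can be extinguished, because the positive density of $\supp(\nu)$ leaves enough admissible valuations strictly below each threshold $v_p(b_a)$ at the finitely many primes involved. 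The ``bad'' tail then contributes density at most $\varepsilon$, which can be chosen smaller than $\mathrm{dens}(M_K)/a_0$, yielding a positive density of good $n$ and completing the plan.
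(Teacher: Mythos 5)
Your cancellation identity is fine and mirrors the second half of the paper's argument (the paper takes $d:=\min\supp(f)$ and produces a positive proportion of $n=dm$ with $g(n)=f(d)\nu(n/d)\neq 0$; your $a_0$ is the same minimum). But there are two genuine gaps. First, the theorem asserts that $\supp(g)$ \emph{possesses} an asymptotic density, i.e.\ that $\lim_{x\to\infty}\supp(g)(x)/x$ exists and is positive; your construction only exhibits a positive-density family inside $\supp(g)$, hence at best a positive \emph{lower} density. The existence part is the bulk of the paper's proof: $\supp(g)$ is partitioned into equivalence classes according to the pair $(\curly{S}_i,\curly{T}_i)$ of divisors from $\supp(f)$ and those $d$ with $\nu(n/d)\neq 0$, each class is shown to have a density via the key Lemma \ref{powered_lem} (which rests on Lemmas \ref{asym_dens_lem} and \ref{asym_dens_lem2}), and an infinite union is controlled by a tail estimate using thinness of $\supp(f)$. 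None of this appears in your proposal, so the statement as given is not proved.

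Second, your positivity argument is circular at its crucial step. You pick $\varepsilon$, then $K=K(\varepsilon)$ with $\sum_{a\in A^*,\,a>K}1/a<\varepsilon$, then need $\varepsilon<\mathrm{dens}(M_K)/a_0$ --- but $M_K$ depends on $K$, hence on $\varepsilon$, and shrinking $\varepsilon$ shrinks $M_K$. What you actually need is $\inf_K \mathrm{dens}(M_K)>0$, and your justification does not give it: the conditions $b_a\nmid m$ are not single-prime (local) conditions when $b_a$ has several prime factors, and replacing each by ``$q\nmid m$ for some prime $q\mid b_a$'' can force the exclusion of a set of primes $q$ with $\sum 1/q$ divergent even though $A^*$ is thin, so the resulting Euler product can tend to $0$ as $K\to\infty$. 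The uniform lower bound needed here is exactly a Behrend/Heilbronn--Rohrbach-type fact about sets of multiples of a thin set not containing $1$ having density $<1$; this is the paper's Lemma \ref{set_of_mul_lem} (quoted from \cite{PS13}), which feeds into the positivity clause of Lemma \ref{asym_dens_lem2}. Your sketch never supplies this ingredient, so the positivity claim is not established either.
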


This is a true generalization of Theorem \ref{old_theorem}, since $1$ and $\mu$ are multiplicative arithmetic functions with support of positive asymptotic density.
Precisely, $\supp(\mu)$ is the set of squarefree numbers and it has density $6 / \pi^2$, as is well known.

The discovery of Theorem \ref{old_theorem} was initially motivated by the desire to prove a kind of \emph{uncertainty principle for the M\"obius transform}, in the sense that $f$ and $g$ cannot both be of thin support if $(f,g)$ is a nonzero M\"obius pair, which is in turn a generalization of a previous result of P. Pollack \cite{pollack11}.
Note the analogy with the well-know uncertainty principle of harmonic analysis, which states that a not identically zero function and its Fourier transform cannot both be compactly supported \cite{benedicks85}.

Now, in the same spirit, we show that Theorem \ref{main_theorem} leads to the following uncertainty principle.

\begin{cor}\label{uncert_cor}
Let $\nu$ be a multiplicative arithmetic function with support of positive asymptotic density.
If $(f,g)$ is a nonzero $\nu$-pair then $\supp(f)$ and $\supp(g)$ cannot be both thin.
\end{cor}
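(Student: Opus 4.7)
The plan is to derive the corollary by a direct argument by contradiction, invoking Theorem \ref{main_theorem} only once. Suppose, for contradiction, that $(f,g)$ is a nonzero $\nu$-pair in which both $\supp(f)$ and $\supp(g)$ are thin. Because $\nu$ is multiplicative with support of positive asymptotic density and $\supp(f)$ is thin, the hypotheses of Theorem \ref{main_theorem} are satisfied, so $\supp(g)$ must possess a positive asymptotic density.

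The argument will then be closed by the elementary observation that a set of positive integers of positive asymptotic density cannot be thin. More precisely, if $\mathcal{A} \subseteq \mathbf{N}$ satisfies $|\mathcal{A} \cap [1,x]| \geq \delta x$ for all sufficiently large $x$ and some $\delta > 0$, then partial summation gives
\[
\sum_{\substack{a \in \mathcal{A} \\ a \leq x}} \frac{1}{a} \;=\; \frac{|\mathcal{A} \cap [1,x]|}{x} \,+\, \int_{1}^{x} \frac{|\mathcal{A} \cap [1,t]|}{t^{2}}\,dt \;\geq\; \delta \log x + O(1),
\]
which diverges as $x \to \infty$. Applying this with $\mathcal{A} = \supp(g)$ contradicts the thinness of $\supp(g)$ and so finishes the argument.

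I do not anticipate any genuine obstacle here: the corollary should be a formal consequence of Theorem \ref{main_theorem} combined with this standard density-versus-thinness dichotomy. It is worth noting that this single-application strategy is essentially forced, since swapping the roles of $f$ and $g$ would turn $(f,g)$ into a $\nu^{-1}$-pair, and the multiplicative inverse $\nu^{-1}$ need not have support of positive asymptotic density; thus a symmetric second application of the theorem is unavailable, but also unnecessary.
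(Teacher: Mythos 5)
Your core argument is correct and is essentially the first branch of the paper's own proof: assuming both supports are thin, you apply Theorem \ref{main_theorem} to the thin set $\supp(f)$ to get $\ds(\supp(g))>0$, and then use partial summation to see that a set of positive asymptotic density is never thin; your Abel-summation display is right, and this single application indeed suffices, since ``not both thin'' is logically equivalent to the one implication you prove. (The paper also runs a second, symmetric branch, which is logically redundant for the corollary as stated.)

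Your closing remark, however, is mistaken. The Dirichlet inverse $\nu^{-1}$ of a multiplicative function is again multiplicative, and $\nu^{-1}(p)=-\nu(p)$ for every prime $p$, so a prime lies in $\supp(\nu^{-1})$ exactly when it lies in $\supp(\nu)$. By Lemma \ref{mul_fun_charac}, positivity of the density of the support of a multiplicative function depends only on the convergence of $\sum_{p \notin \supp(\nu)} 1/p$, hence $\supp(\nu^{-1})$ has positive asymptotic density whenever $\supp(\nu)$ does. The paper uses precisely this observation to apply Theorem \ref{main_theorem} to the $\nu^{-1}$-pair $(g,f)$, obtaining the stronger symmetric conclusion that thinness of $\supp(g)$ forces $\ds(\supp(f))>0$. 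So the symmetric second application is available, merely unnecessary; since your erroneous aside plays no role in your deduction, your proof of the corollary itself stands.
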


In the case when $f$ and $g$ are multiplicative arithmetic functions, we give also the following quantitative version of Theorem \ref{main_theorem}.

\begin{thm}\label{mul_bound}
Let $\nu$ be a multiplicative arithmetic function with support of positive asymptotic density.
Then there exists a constant $C_\nu > 0$, depending only on $\nu$, such that for every $\nu$-pair $(f,g)$ of multiplicative arithmetic functions it holds
\begin{equation*}
\ds(\supp(g)) \geq \frac{C_\nu}{\sum_{n \in \supp(f)} 1 / n} ,
\end{equation*}
where by convention $C_\nu / \infty := 0$. More specifically, we can choose $C_\nu$ as
\begin{equation*}
\frac{6}{\pi^2} \prod_{p \notin \supp(\nu)}\left(1 + \frac1{p}\right)^{-1} ,
\end{equation*}
where the product is proved to converges to a positive real number.
\end{thm}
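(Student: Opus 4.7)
The plan is to exploit the multiplicativity of $f$ and $\nu$ (and hence of $g = f * \nu$) to reduce the whole problem to a prime-by-prime (Euler product) analysis, by constructing an explicit subset $\mathcal{M} \subseteq \supp(g)$ whose density can be bounded locally. For each prime $p$, set
\[ A_p := \{k \geq 0 : f(p^k) \neq 0\}, \quad N_p := \{k \geq 0 : \nu(p^k) \neq 0\}, \quad a_p^* := \min(A_p \setminus \{0\}), \]
with the convention $\min \emptyset = \infty$, and define
\[ \mathcal{M} := \{n \in \N : v_p(n) \in N_p \cap [0, a_p^*) \text{ for every prime } p\}. \]
The first step is to verify $\mathcal{M} \subseteq \supp(g)$: for $n \in \mathcal{M}$, any divisor $d \mid n$ with $d > 1$ and $d \in \supp(f)$ must contain a prime $p$ with $v_p(d) \in A_p \setminus \{0\}$, so $v_p(d) \geq a_p^* > v_p(n) \geq v_p(d)$, a contradiction. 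Only $d = 1$ therefore contributes to $(f * \nu)(n)$, giving $g(n) = \nu(n) \neq 0$ since $v_p(n) \in N_p$ for every $p$ by construction.

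Since $\mathbf{1}_\mathcal{M}$ is a $\{0, 1\}$-valued multiplicative function, a standard truncation argument shows that $\ds(\mathcal{M}) = \prod_p L_p$ (whenever the product is positive), where
\[ L_p := \left(1 - \frac{1}{p}\right) \sum_{k \in N_p \cap [0, a_p^*)} \frac{1}{p^k}, \]
and by multiplicativity one has $\sum_{n \in \supp(f)} 1/n = \prod_p \sum_{k \in A_p} p^{-k}$. The crux of the proof is then the local inequality
\[ L_p \cdot \sum_{k \in A_p} \frac{1}{p^k} \geq R_p := \begin{cases} 1 - 1/p^2 & \text{if } \nu(p) \neq 0, \\ 1 - 1/p & \text{if } \nu(p) = 0, \end{cases} \]
which I plan to verify by a short case analysis on the three regimes $a_p^* = 1$, $a_p^* \geq 2$, and $a_p^* = \infty$, using $\{0, 1\} \subseteq N_p$ whenever $\nu(p) \neq 0$ and $1 \in A_p$ whenever $a_p^* = 1$. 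Multiplying this bound over all $p$, and using $\prod_p (1 - 1/p^2) = 6/\pi^2$ to simplify the resulting product, yields
\[ \ds(\mathcal{M}) \cdot \sum_{n \in \supp(f)} \frac{1}{n} \geq \prod_p R_p = \frac{6}{\pi^2} \prod_{p \notin \supp(\nu)} \left(1 + \frac{1}{p}\right)^{-1} = C_\nu, \]
which is the desired bound via $\mathcal{M} \subseteq \supp(g)$.

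Two ancillary points close the argument. For the convergence of $C_\nu$ to a positive real it suffices to show $\sum_{\nu(p) = 0} 1/p < \infty$; this follows from $\ds(\supp(\nu)) > 0$ because the Euler product formula for $\supp(\nu)$ gives a local factor at most $1 - 1/p + 1/p^2$ whenever $\nu(p) = 0$, and positivity of the product then forces the sum to converge. For the existence of $\ds(\mathcal{M})$ itself, note that in the only nontrivial regime $\sum_n 1/n < \infty$, the local inequality already forces $\prod_p L_p \geq C_\nu / \sum_n 1/n > 0$, so the standard truncation argument applies. The main genuine obstacle is therefore the prime-by-prime inequality for $L_p \cdot \sum_{k \in A_p} p^{-k}$, which is elementary but requires careful bookkeeping across the sub-cases; everything else is either direct computation or a classical Euler product manipulation.
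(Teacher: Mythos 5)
Your proof is correct, but it takes a genuinely different route from the paper. The paper applies the Euler-product density formula for supports of multiplicative functions (Lemma \ref{mul_fun_charac}, i.e.\ Tenenbaum's mean value theorem) directly to $g$, obtaining $\ds(\supp(g)) \geq \tfrac{6}{\pi^2}\prod_{p\notin\supp(g)}(1+1/p)^{-1}$, and then exploits the identity $g(p)=f(p)+\nu(p)$ at primes: $g(p)=0$ forces either $p\notin\supp(\nu)$ and $p\notin\supp(f)$, or $p\in\supp(f)$ with $f(p)=-\nu(p)$, so the missing local factors are absorbed into $C_\nu$ and into $\prod_{p\in\supp(f)}(1+1/p)\leq\sum_{n\in\supp(f)}1/n$. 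You never look at $g$ at primes: you construct the multiplicatively defined subset $\mathcal{M}\subseteq\supp(g)$ on which the convolution collapses to $g(n)=\nu(n)$ (no nontrivial divisor of $n$ can lie in $\supp(f)$, using $f(1)=1$ and multiplicativity of $f$), compute $\ds(\mathcal{M})$ as an Euler product, and prove the local inequality $L_p\sum_{k\in A_p}p^{-k}\geq R_p$. I checked the regimes ($a_p^*=1$; $a_p^*\geq 2$ or $a_p^*=\infty$, split according to $\nu(p)\neq 0$ or $\nu(p)=0$): the inequality holds, and $\prod_p R_p$ equals the stated $C_\nu$, so the deferred case analysis is indeed only routine bookkeeping. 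Your approach is in the spirit of the positivity argument in the proof of Theorem \ref{main_theorem} (exhibit a cancellation-free subset of $\supp(g)$) and uses the exact factorization $\sum_{n\in\supp(f)}1/n=\prod_p\sum_{k\in A_p}p^{-k}$, whereas the paper's route is shorter because Tenenbaum's theorem applied to $g$ itself does all the work at once. Two small touch-ups: first, your argument as written bounds only the lower density of $\supp(g)$ via $\ds(\mathcal{M})$, so add a sentence noting that $\ds(\supp(g))$ exists because $g$ is multiplicative (same mean value theorem applied to the indicator of $\supp(g)$); second, instead of a bespoke truncation argument for $\ds(\mathcal{M})=\prod_p L_p$, you can invoke that mean value theorem for the multiplicative $[0,1]$-valued function $\mathbf{1}_{\mathcal{M}}$, which needs no convergence hypothesis and streamlines your ancillary discussion, while your deduction of $\sum_{p\notin\supp(\nu)}1/p<\infty$ from $\ds(\supp(\nu))>0$ is exactly the content of Lemma \ref{mul_fun_charac}.
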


P. Pollack and the author have also given a result similar to Theorem \ref{old_theorem}, but weighted by the absolute values of $f$ and $g$ \cite[Theorem 1.2]{PS13}.
We recall that the mean value of an arithmetic function $f$ is the limit $\lim_{x \to \infty} (1 / x)\sum_{n \leq x} f(n)$, whenever it exists.
\begin{thm}\label{old_theorem2}
Suppose that $(f,g)$ is a nonzero M\"obius pair. If 
\begin{equation*}
\sum_{n=1}^\infty \frac{|f(n)|}{n} < \infty ,
\end{equation*}
then $|g|$ possesses a positive mean value.
The same result holds with the roles of $f$ and $g$ reversed.
\end{thm}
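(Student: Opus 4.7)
The plan is to combine a truncation of $f$ with the reverse triangle inequality for $|g|$. For each positive integer $N$, write $f = f_N + f^N$ with $f_N := f \cdot \mathbf{1}_{[1,N]}$, and correspondingly $g_N := f_N * \mu$ and $g^N := f^N * \mu$, so that $g = g_N + g^N$ and $|g| \geq |g_N| - |g^N|$ pointwise. This splits the desired lower bound on the mean of $|g|$ into a tail estimate on $|g^N|$ and a main-term lower bound on $|g_N|$. The tail is handled routinely: from $|g^N(n)| \leq (|f^N| * |\mu|)(n)$ and the classical mean value $6/\pi^2$ of $|\mu|$ (the indicator of the squarefree integers), swapping summations yields
\begin{equation*}
\limsup_{x \to \infty}\frac{1}{x}\sum_{n \leq x}|g^N(n)| \leq \frac{6}{\pi^2}\,\epsilon_N, \qquad \epsilon_N := \sum_{n > N}\frac{|f(n)|}{n}\,,
\end{equation*}
with $\epsilon_N \to 0$ by hypothesis.

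For the main part I construct a positive-density set on which $|g_N|$ is bounded below by a positive constant. Set $n_0 := \min\supp(f)$; since any proper divisor of $n_0$ is strictly smaller than $n_0$ and hence outside $\supp(f)$, $n_0$ is divisibility-minimal in $\supp(f_N)$ for every $N \geq n_0$. With $L_N := \lcm(\supp(f_N))$, consider the set $A_N := \{n_0 m : m \text{ squarefree},\ \gcd(m, L_N) = 1\}$. For any $n = n_0 m \in A_N$, writing a divisor of $n$ as $dk$ with $d \mid n_0$ and $k \mid m$, the coprimality of $k$ with $L_N$ combined with $dk \mid L_N$ forces $k = 1$; divisibility-minimality of $n_0$ in $\supp(f_N)$ then leaves $n_0$ as the only divisor of $n$ in $\supp(f_N)$. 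Hence $g_N(n) = f(n_0)\mu(m)$, and $|g_N(n)| = |f(n_0)|$. Since $A_N$ has asymptotic density $\beta_N/n_0$ with $\beta_N := (6/\pi^2)\prod_{p \mid L_N}(1 + 1/p)^{-1}$, combining with the tail estimate gives
\begin{equation*}
\liminf_{x \to \infty}\frac{1}{x}\sum_{n \leq x}|g(n)| \geq \frac{|f(n_0)|\,\beta_N}{n_0} - \frac{6}{\pi^2}\,\epsilon_N \qquad \text{for every } N \geq n_0\,.
\end{equation*}

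When $\supp(f)$ involves only finitely many primes the sequence $L_N$ stabilizes, so $\beta_N$ is bounded below and the choice of $N$ large makes $\epsilon_N$ arbitrarily small, completing the argument. The hard part will be the general case, in which $\supp(f)$ involves infinitely many primes and $\beta_N$ may tend to zero faster than $\epsilon_N$. In that regime the naive $A_N$ is too restrictive, and the plan is to enlarge it to all squarefree multiples of $n_0$ and track the additional contributions to $g_N(n)$ via an inclusion-exclusion expansion in the primes dividing $L_N$, exploiting the cancellation of the $\mu$ factors to produce a lower bound on the mean of $|g_N|$ that is uniform in $N$. The symmetric statement, where $\sum|g(n)|/n < \infty$ implies that $|f|$ has positive mean value, follows from the same scheme applied to the identity $f = g * 1$, with $\mu$ replaced by the constant function $\mathbf{1}$ (of mean value $1$) in the tail estimate and the squarefree requirement on $m$ dropped from the good set, the same balancing issue being handled analogously.
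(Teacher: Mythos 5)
Your tail estimate and the computation on $A_N$ are correct as far as they go, but the proposal stops exactly where the theorem becomes hard, and it also omits half of the conclusion. The hypothesis $\sum_n |f(n)|/n<\infty$ does \emph{not} make $\supp(f)$ thin: take, say, $|f(n)|=1/\bigl(\log n\,(\log\log n)^2\bigr)$ for $n\ge 3$, so that $\supp(f)$ contains every $n\ge 3$. In that regime your bound is vacuous: $\beta_N\asymp 1/\log N$ by Mertens, while $\epsilon_N$ can decay as slowly as $1/\log\log N$, so $|f(n_0)|\beta_N/n_0-(6/\pi^2)\epsilon_N<0$ for all large $N$, and there is no reason any single $N$ works. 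You acknowledge this, but what you offer for the general case is only a plan, and the plan runs into the same obstruction it is meant to remove: on a typical squarefree multiple of $n_0$ many divisors from $\supp(f_N)$ are present, $g_N(n)$ is a sum of many terms $f(e)\mu(n/e)$, and the sign cancellation of $\mu$ is precisely what prevents a pointwise or first-moment \emph{lower} bound for $|g_N(n)|$; excluding the divisors in $\supp(f)\cap(n_0,N]$ costs a density factor that again tends to $0$ with $N$, while a first-moment bound on their contribution costs $\sum_{e>n_0}|f(e)|/e$, which is a constant, not a small quantity. So the crux of the statement --- positivity of the mean value without any thinness of $\supp(f)$ --- is not proved.

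Moreover, the statement asserts that $|g|$ \emph{possesses} a positive mean value, i.e.\ that the limit exists; your argument only bounds the liminf from below and never addresses existence. This is not a routine addendum: in the framework the paper relies on (it cites \cite[Theorem 1.2]{PS13} for this statement, and proves the generalization, Theorem~\ref{second_theorem}, by the same scheme), existence is the content of the truncated-convolution lemma for $g_y$ (items (i)--(iii)), whose proof requires showing that $|g_y|$ has a mean value by decomposing the integers according to their set of divisors in the finite set $\supp(f)\cap[1,y]$ together with the vanishing pattern of $\mu$ --- this is where Lemma~\ref{powered_lem} (resp.\ \cite[Lemmas 2.1--2.3]{PS13}) enters --- followed by a squeeze using exactly the tail bound you proved. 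Your truncation $g_N$ is the same object as the paper's $g_y$, so the first step of your scheme is aligned with the intended proof; but both genuinely hard ingredients, the existence of the mean value and the positivity argument valid for non-thin support, are missing.
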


We generalize Theorem \ref{old_theorem2} by proving the following:

\begin{thm}\label{second_theorem}
Let $\nu$ be a bounded multiplicative arithmetic function with support of positive asymptotic density.
Suppose that $(f,g)$ is a $\nu$-pair. If
\begin{equation*}
\sum_{n=1}^\infty \frac{|f(n)|}{n} < \infty ,
\end{equation*}
then $|g|$ possesses a finite mean value. Moreover, if $f$ do not vanish identically and
\begin{equation*}
\inf_{n \in \supp(\nu)} |\nu(n)| > 0 ,
\end{equation*}
then the mean value of $|g|$ is positive.
\end{thm}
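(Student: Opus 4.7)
The plan is to approximate $g$ by the truncated convolutions $g_y := f_y * \nu$, where $f_y(n) := f(n)$ if $n \leq y$ and $0$ otherwise, and to transfer the existence of the mean value from $|g_y|$ to $|g|$. Writing $B := \sup_n |\nu(n)| < \infty$ and $\epsilon_y := \sum_{d > y} |f(d)|/d \to 0$, the pointwise inequality $|g(n) - g_y(n)| \leq B\sum_{d \mid n,\, d > y} |f(d)|$ yields
\[
\sum_{n \leq x}\bigl||g(n)| - |g_y(n)|\bigr| \;\leq\; Bx\,\epsilon_y,
\]
uniformly in $x$. Hence it is enough to show that each $|g_y|$ has a mean value $M_y$: the sequence $(M_y)$ is then Cauchy, so it converges to some $M_\infty \in [0,\infty)$ which must be the mean value of $|g|$. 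Finiteness of $M_\infty$ also follows a priori from $|g| \leq |f| * |\nu|$, Wintner's theorem, and the fact that the bounded nonnegative multiplicative function $|\nu|$ has a mean value by Wirsing's theorem, equal to $\prod_p (1 - 1/p)\sum_{k \geq 0} |\nu(p^k)|/p^k$.

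The main obstacle is the existence of $M_y$ for fixed $y$. To address it, I would partition $\N$ into residue classes modulo $L := \lcm(1, \ldots, y)$. For $n \equiv r \pmod{L}$ the divisors of $n$ that do not exceed $y$ are precisely $D_r := \{d \leq y : d \mid r\}$, so
\[
g_y(n) \;=\; \sum_{d \in D_r} f(d)\,\nu(n/d),
\]
and writing $n = Lk + r$ each quotient $n/d$ ($d \in D_r$) traces out the arithmetic progression $r/d \pmod{L/d}$ as $k$ varies. Existence of $M_y$ thus reduces, for each $r$, to the existence of a mean value in $k$ of $k \mapsto \bigl|\sum_{d \in D_r} f(d)\,\nu((L/d)k + r/d)\bigr|$, which I would deduce from the mean-value theorems of Wirsing, Delange and Hal\'asz for bounded multiplicative functions on arithmetic progressions, exploiting the multiplicativity of $|\nu|$.

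For the positivity statement, assume $c := \inf_{n \in \supp(\nu)} |\nu(n)| > 0$ and $f \not\equiv 0$, and fix a $d_0$ with $f(d_0) \neq 0$. For $y$ to be chosen large, I would construct via a sieve a set $S_y \subset \N$ of positive asymptotic density consisting of integers $n = d_0 m$ with $\nu(m) \neq 0$ such that, among the divisors $d \leq y$ of $n$, only $d_0$ contributes a nonzero term $f(d)\nu(n/d)$. This is possible because $\supp(\nu)$ has positive density by hypothesis and the ``bad'' divisors can be avoided by imposing a finite set of congruence and coprimality conditions on $m$, in the spirit of the constructions underlying Theorem \ref{main_theorem}. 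Markov's inequality applied to the first-moment bound $\sum_{n \leq x}\sum_{d \mid n,\, d > y}|f(d)| \leq x \epsilon_y$ shows that on all but a $\sqrt{\epsilon_y}$-fraction of the $n \in S_y$ the tail satisfies $|\sum_{d \mid n,\, d > y} f(d)\nu(n/d)| \leq B\sqrt{\epsilon_y}$. Combined with $|f(d_0)\nu(m)| \geq c|f(d_0)|$, this yields $|g(n)| \geq c|f(d_0)|/2$ on a positive-density subset once $y$ is sufficiently large, whence $|g|$ has positive mean value.
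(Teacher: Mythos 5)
Your overall scheme---truncating to $g_y(n)=\sum_{d\mid n,\,d\le y}f(d)\nu(n/d)$, proving the uniform bound $\sum_{n\le x}\bigl||g(n)|-|g_y(n)|\bigr|\le Bx\epsilon_y$, and transferring the mean value from $|g_y|$ to $|g|$ by a Cauchy argument---is exactly the paper's scheme (parts (ii) and (iii) of its Lemma), and that part of your write-up is correct, including the correct use of the boundedness of $\nu$. The genuine gap is at the central step, the existence of the mean value $M_y$ of $|g_y|$ for fixed $y$. Reducing modulo $L=\lcm(1,\ldots,y)$ only determines \emph{which} $d\le y$ divide $n$; it does not determine the values $\nu(n/d)$, and the function $k\mapsto\bigl|\sum_{d\in D_r}f(d)\,\nu((L/d)k+r/d)\bigr|$ is not multiplicative in $k$ (nor in $n$). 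The theorems of Wirsing, Delange and Hal\'asz give mean values of a \emph{single} multiplicative function (for complex $\nu$, Hal\'asz even allows $n^{it}$-type oscillation), and they say nothing about the mean of the modulus of a linear combination of values of $\nu$ along several progressions; in general that is a correlation problem of Elliott--Chowla type, so your appeal to these theorems does not close the step. What makes the problem tractable is the multiplicative relation among the arguments $n/d$: every $d\le y$ is composed of primes $\le y$, so writing $n=uv$ with $u$ the part of $n$ built from primes $\le y$ and $v$ coprime to them, one has $\nu(n/d)=\nu(u/d)\,\nu(v)$ and hence $|g_y(n)|=|g_y(u)|\,|\nu(v)|$; the mean value of $|g_y|$ is then obtained by decomposing according to the finitely/countably many relevant configurations and invoking density statements of the type of Lemma \ref{powered_lem} (this is what the paper does, following the proof of Lemma 4.1 of Pollack--Sanna). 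This ingredient---some exploitation of the joint structure of the values $\nu(n/d)$---is missing from your proposal, so step (i) is not established.

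Two further points about your positivity argument. First, fixing an \emph{arbitrary} $d_0$ with $f(d_0)\neq 0$ does not work: if some proper divisor $d'$ of $d_0$ lies in $\supp(f)$, then $d'\mid n$ for every $n=d_0m$ and the term $f(d')\nu(n/d')$ need not vanish, so no sieve on $m$ can leave only the $d_0$-term; you must take $d_0$ minimal in $\supp(f)$ (the paper takes the least element, which has no other divisor in $\supp(f)$). Second, your Markov step requires choosing $y$ so that simultaneously $B\sqrt{\epsilon_y}\le c|f(d_0)|/2$ and $\ds(S_y)>\sqrt{\epsilon_y}$; since $\sum_n|f(n)|/n<\infty$ does not force $\supp(f)$ to be thin, the densities $\ds(S_y)$ may decrease to $0$ as $y\to\infty$, and nothing in your sketch guarantees that a suitable $y$ exists. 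The tail-versus-density bookkeeping is a real issue here and needs an explicit argument for how $y$ is selected (or a restriction of the sieve to a fixed finite set of ``bad'' divisors with quantitative control), not just the qualitative statement that $S_y$ has positive density for each fixed $y$.
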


\subsection*{Notation}
Hereafter, $\N$ denotes the set of positive integers and $\N_0 := \N \cup \{0\}$.
For any $\curly{A} \subseteq \N$ and $x \geq 0$ we write $\curly{A}(x) := \# (\curly{A} \cap [1,x])$ for the number of elements of $\curly{A}$ not exceeding $x$.
We use $\ds(\curly{A}) := \lim_{x \to \infty} \curly{A}(x) / x$ for the asymptotic density of $\curly{A}$, whenever this exists.
Similarly, we denote the lower and upper asymptotic density of $\curly{A}$ by $\underline{\ds}(\curly{A})$ and $\overline{\ds}(\curly{A})$, respectively.
The letter $p$ always denotes a prime number.
The notation $p^k  \mid\mid n$ means that $p^k \mid n$, but $p^{k+1} \nmid n$.

\section{Preliminaries}

This section is devoted to some lemmas needed for the proof of Theorem \ref{main_theorem}.
The first one deals with the existence of the asymptotic density of certain sieved sets of positive integers.

\begin{lem}\label{asym_dens_lem}
Let $\curly{B}$ be a set of positive integers and suppose that to every $b \in \curly{B}$ there corresponds a set $\Omega_b \subseteq \{0,1,\ldots,b-1\}$.
Furthermore, for any $b \in \curly{B}$ let
\begin{equation*}
\curly{S}_b := \{n \in \N : (n \bmod b) \in \Omega_b \}
\end{equation*}
and assume that $\curly{S}_b(x) \leq c_b x$ for all $x \geq 0$, where $c_b$ are positive constants
satisfying 
\begin{equation*}
\sum_{b \in \curly{B}} c_b < \infty .
\end{equation*}
Then the set of positive integers $n$ such that $(n \bmod b) \not\in \Omega_b$ for all $b \in \curly{B}$ possesses an asymptotic density.
\end{lem}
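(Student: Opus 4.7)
The plan is to exploit the periodicity obtained from any finite subfamily: for any finite $\curly{B}' \subseteq \curly{B}$, the set $\bigcap_{b \in \curly{B}'}(\N \setminus \curly{S}_b)$ is a union of residue classes modulo $\lcm_{b \in \curly{B}'} b$ and hence admits an asymptotic density. I would then recover the density of the full intersection $\curly{T}$ by approximating from above by such finite truncations, with the approximation error controlled by the tail of the series $\sum c_b$.

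Since $\sum_{b \in \curly{B}} c_b$ is a convergent series of strictly positive terms, $\curly{B}$ is at most countable; I would enumerate $\curly{B} = \{b_1, b_2, \ldots\}$ (the case of finite $\curly{B}$ is immediate from what follows, with no limit required). Setting $\curly{T}_k := \{n \in \N : (n \bmod b_j) \notin \Omega_{b_j} \text{ for all } j = 1, \ldots, k\}$, each $\curly{T}_k$ has a well-defined asymptotic density $d_k$; since $\curly{T}_{k+1} \subseteq \curly{T}_k$, the sequence $(d_k)$ is nonincreasing and bounded below by $0$, and hence converges to some $L \in [0,1]$. The remaining task is to show $\ds(\curly{T}) = L$.

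For this, the key estimate is the straightforward inclusion-exclusion bound
\begin{equation*}
0 \leq \curly{T}_k(x) - \curly{T}(x) \leq \sum_{j > k} \curly{S}_{b_j}(x) \leq x \sum_{j > k} c_{b_j} ,
\end{equation*}
valid for all $x \geq 0$ and all $k \geq 1$. Dividing by $x$ and combining with $\curly{T}_k(x)/x \to d_k$ as $x \to \infty$, a standard three-$\varepsilon$ argument then gives $\curly{T}(x)/x \to L$. The only potentially delicate point is the interchange of the limits $x \to \infty$ and $k \to \infty$, but this is precisely what the hypothesis $\sum_{b \in \curly{B}} c_b < \infty$ is tailored to handle: the tail $\sum_{j > k} c_{b_j}$ vanishes uniformly in $x$, so no real obstacle arises.
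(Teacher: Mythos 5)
Your proposal is correct and follows essentially the same route as the paper: approximate by finite truncations, note each truncation has a density (you observe periodicity modulo the lcm directly, where the paper uses inclusion--exclusion and the Chinese Remainder Theorem, a negligible difference), and control the error by the tail $\sum_{j>k} c_{b_j}$ via the very same sandwich inequality. No gaps.
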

\begin{proof}
We have to prove that the set
\begin{equation*}
\curly{S}_{\curly{B}} := \{n \in \N : (n \bmod b) \notin \Omega_b \quad \forall b \in B\} ,
\end{equation*}
has an asymptotic density.
If $\curly{B}$ is empty then $\curly{S}_{\curly{B}} = \N$ and the claim is trivial.

Suppose that $\curly{B}$ is finite with $k \geq 1$ elements $b_1, b_2, \ldots, b_k$.
From the inclusion-exclusion principle it follows
\begin{equation}\label{incl_excl_eq}
\curly{S}_{\curly{B}}(x) = \N(x) + \sum_{h=1}^k (-1)^h \sum_{1 \leq i_1 < \cdots < i_h \leq k} \big(\curly{S}_{b_{i_1}} \cap \cdots \cap \curly{S}_{b_{i_h}}\big)(x) ,
\end{equation}
for all $x > 0$. 
Now, fix $h$ and $i_1, \ldots, i_h$ positive integers such that $h \le k$, $i_1 < \cdots < i_h \leq k$, as in equation (\ref{incl_excl_eq}), and set $\ell := \lcm(b_{i_1}, b_{i_2}, \ldots, b_{i_h})$.
As a consequence of the Chinese Remainder Theorem, there exists a subset $\Theta$ of $\{0,1,\ldots,\ell-1\}$ such that for $n \in \N$ it holds $n \in \curly{S}_{b_{i_1}} \cap \cdots \cap \curly{S}_{b_{i_h}}$ if and only if $(n \bmod \ell) \in \Theta$.
So $\curly{S}_{b_{i_1}} \cap \cdots \cap \curly{S}_{b_{i_h}}$ has asymptotic density $\# \Theta / \ell$.
Dividing (\ref{incl_excl_eq}) by $x$ and letting $x \to \infty$ yields that $\curly{S}_\curly{B}$ has an asymptotic density.

Suppose now that $\curly{B}$ is infinite and let $b_1, b_2, \ldots$ be a numbering of $\curly{B}$.
For all positive integer $k$ define $\curly{B}_k := \{b_1, b_2, \ldots, b_k\}$. 
We have just seen that $\curly{S}_{\curly{B}_k}$ has an asymptotic density, so put $d_k := \ds(\curly{S}_{\curly{B}_k})$.
Since $\curly{S}_{\curly{B}_1} \supseteq \curly{S}_{\curly{B}_2} \supseteq \cdots$, it follows that $d_1, d_2, \ldots$ is a nonnegative decreasing sequence, so there exists $d := \lim_{k \to \infty} d_k$.
Furthermore
\begin{equation}\label{approx_eq}
0 \leq \curly{S}_{\curly{B}_k}(x) - \curly{S}_{\curly{B}}(x) \leq \sum_{i=k+1}^\infty \curly{S}_{b_i}(x) \leq x \sum_{i=k+1}^\infty c_{b_i}
\end{equation}
for all $x > 0$ and $k \in \N$, 
where the last term in the above inequality is a convergent series (by the hypothesis).
Dividing equation (\ref{approx_eq}) by $x$ and letting $x \to \infty$ we obtain that
\begin{equation}\label{approx_eq2}
\limsup_{x \to \infty}\left| \frac{\curly{S}_{\curly{B}}(x)}{x} - d_k\right| \leq \sum_{i=k+1}^\infty c_{b_i} .
\end{equation}
Finally, letting $k \to \infty$ in equation (\ref{approx_eq2}) it follows that $\curly{S}_{\curly{B}}$ has asymptotic density $d$.
\end{proof}

If $\curly{A}$ is a set of positive integers, we write $\curly{M}(\curly{A}) := \{ an : a \in \curly{A},\; n \in \N\}$ for the \emph{set of multiples} of $\curly{A}$.
The interested reader can found many results on sets of multiples in \cite{hall96}.
We need only to state the following lemma about the asymptotic density of $\curly{M}(\curly{A})$.

\begin{lem}\label{set_of_mul_lem}
If $\curly{A}$ is a thin set of positive integers, then $\curly{M}(\curly{A})$ has an asymptotic density.
Moreover, if $1 \notin \curly{A}$ then $\ds(\curly{M}(\curly{A})) < 1$.
\end{lem}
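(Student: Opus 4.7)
The existence part falls out of Lemma~\ref{asym_dens_lem}: I would apply that lemma with $\curly{B} := \curly{A}$, $\Omega_a := \{0\}$ and $c_a := 1/a$ for every $a \in \curly{A}$. Then $\curly{S}_a$ is the set of multiples of $a$, so $\curly{S}_a(x) \leq x/a$, and the hypothesis $\sum_{a} c_a < \infty$ is precisely the thinness of $\curly{A}$. The sieved set $\{n \in \N : a \nmid n \text{ for all } a \in \curly{A}\}$ is nothing but $\N \setminus \curly{M}(\curly{A})$, so it possesses an asymptotic density, and therefore so does $\curly{M}(\curly{A})$.

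For the strict inequality $\ds(\curly{M}(\curly{A})) < 1$ when $1 \notin \curly{A}$, enumerate $\curly{A} = \{a_1, a_2, \ldots\}$ (so $a_i \geq 2$), set $\curly{A}_k := \{a_1, \ldots, a_k\}$, and let $d_k := \ds(\N \setminus \curly{M}(\curly{A}_k))$. As already shown in the proof of Lemma~\ref{asym_dens_lem}, the sequence $(d_k)$ decreases to $d := \ds(\N \setminus \curly{M}(\curly{A}))$, so it suffices to bound $d_k$ from below by a positive quantity independent of $k$. The plan is to prove the Heilbronn--Rohrbach-style estimate $d_k \geq \prod_{i=1}^k (1 - 1/a_i)$ by induction on $k$; combined with $\sum 1/a_i < \infty$ and $a_i \geq 2$, this yields $d \geq \prod_{i=1}^\infty (1 - 1/a_i) > 0$, as required.

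The induction step is the technical heart. Writing $E_k := \N \setminus \curly{M}(\curly{A}_k)$ and $a := a_{k+1}$, it reduces to the inequality $\ds(a\N \cap E_k) \leq \ds(E_k)/a$, because then $\ds(E_{k+1}) = \ds(E_k) - \ds(a\N \cap E_k) \geq (1 - 1/a)\,\ds(E_k)$. To establish this inequality I would use the bijection $n \mapsto an$ to rewrite
\begin{equation*}
\#(a\N \cap E_k \cap [1,x]) = \#\{n \leq x/a : b_i \nmid n \text{ for all } i \leq k\},
\end{equation*}
where $b_i := a_i/\gcd(a,a_i)$ is a divisor of $a_i$. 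Since $b_i \mid a_i$ forces $\{n : b_i \nmid n\} \subseteq \{n : a_i \nmid n\}$, the pulled-back set lies inside $E_k$, so its density is at most $\ds(E_k)$; dividing by $a$ yields the claim.

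The only non-routine ingredient is the inequality $\ds(a\N \cap E_k) \leq \ds(E_k)/a$, which is essentially the special case of the Heilbronn--Rohrbach estimate that one needs; everything else is bookkeeping. Should a more compact write-up be preferred, the second assertion can also simply be quoted from Hall's monograph~\cite{hall96}, where the theory of sets of multiples and their density estimates is developed in detail.
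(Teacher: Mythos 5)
Your proof is correct, and it is worth noting how it relates to what the paper actually does. For the existence of $\ds(\curly{M}(\curly{A}))$ you apply Lemma~\ref{asym_dens_lem} with $\curly{B} := \curly{A}$, $\Omega_a := \{0\}$, $c_a := 1/a$, which is precisely the observation the paper itself makes; but for the second assertion the paper offers no argument at all, simply quoting \cite[Lemma 2.2]{PS13}, whereas you supply a self-contained proof. Your route: with $E_k := \N \setminus \curly{M}(\{a_1,\ldots,a_k\})$ and $a := a_{k+1}$, you pull back along $n \mapsto an$, use the equivalence $a_i \mid an \iff b_i \mid n$ with $b_i := a_i/\gcd(a_i,a)$ (valid because $a_i/\gcd(a_i,a)$ and $a/\gcd(a_i,a)$ are coprime), and the inclusion $\{n : b_i \nmid n\} \subseteq \{n : a_i \nmid n\}$ (valid since $b_i \mid a_i$) to get $\ds(a\N \cap E_k) \leq \ds(E_k)/a$; this yields the Heilbronn--Rohrbach-type bound $\ds(E_k) \geq \prod_{i \leq k}(1 - 1/a_i)$ by induction, and since, as in the proof of Lemma~\ref{asym_dens_lem}, $\ds(E_k)$ decreases to $\ds(\N \setminus \curly{M}(\curly{A}))$, the hypotheses $\sum_i 1/a_i < \infty$ and $a_i \geq 2$ make the infinite product positive, so $\ds(\curly{M}(\curly{A})) < 1$. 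The steps you dismiss as bookkeeping really are routine: each $E_k$ and $a\N \cap E_k$ is a union of residue classes modulo $\lcm(a_1,\ldots,a_{k+1})$, so their densities exist and subtract correctly, and the finite-$\curly{A}$ case is immediate. What your approach buys is independence from the external reference (you are in effect reproving the needed special case of the Heilbronn--Rohrbach inequality); the cost is length, and, as you say, one could instead simply cite \cite{hall96} or \cite{PS13} as the paper does.
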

\begin{proof}
See \cite[Lemma 2.2]{PS13}. Incidentally, note that the existence of $\ds(\curly{M}(\curly{A}))$ is a corollary of Lemma \ref{asym_dens_lem}, setting $\curly{B} := \curly{A}$, $\Omega_b = \{0\}$ and $c_b := 1 / b$ for all $b \in \curly{B}$.
\end{proof}

\begin{lem}\label{asym_dens_lem2}
Let $\curly{A}$ be a set of positive integers, $\curly{P}$ a set of prime numbers, and suppose that to every $p \in \curly{P}$ there corresponds a set of nonnegative integers $\curly{K}_p$.
Define $\curly{S}$ to be the set of all positive integers $n$ such that neither $a \mid n$ for some $a \in \curly{A}$, nor $p^k \mid\mid n$ for some $p \in \curly{P}$ and $k \in \curly{K}_p$.
If $\curly{P}_1 := \{p \in \curly{P} : 1 \in \curly{K}_p\}$ and $\curly{A} \cup \curly{P}_1$ if thin then $\curly{S}$ possesses an asymptotic density.
Moreover, if $0 \notin \curly{K}_p$ for all $p \in \curly{P}$ and $1 \notin \curly{A}$ then $\ds(\curly{S}) > 0$.
\end{lem}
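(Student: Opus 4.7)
The plan is to deduce existence from Lemma \ref{asym_dens_lem} and positivity from Lemma \ref{set_of_mul_lem}, after translating every ``forbidden'' condition on $n$ into a congruence condition. For the existence part, I would encode each $a \in \curly{A}$ as the modular condition $n \equiv 0 \pmod{a}$ (modulus $a$, one forbidden residue, density $c = 1/a$), and each pair $(p,k)$ with $p \in \curly{P}$, $k \in \curly{K}_p$ as $n \equiv j p^k \pmod{p^{k+1}}$ for some $1 \leq j \leq p-1$ (modulus $p^{k+1}$, $p-1$ forbidden residues, density $c = (p-1)/p^{k+1}$); conditions sharing a modulus are merged by taking the union of their $\Omega$'s. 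If $\curly{P}_0 := \{p \in \curly{P} : 0 \in \curly{K}_p\}$ is infinite then no integer is divisible by every $p \in \curly{P}_0$, so $\curly{S} = \emptyset$ and the conclusion is trivial. Otherwise the total $\sum c_b$ is bounded above by $\sum_{a \in \curly{A}} 1/a + \sum_{p \in \curly{P}_0}(1 - 1/p) + \sum_{p \in \curly{P}_1} 1/p + \sum_p 1/p^2$, which is finite by thinness of $\curly{A}$ and $\curly{P}_1$ and by the finiteness of $\curly{P}_0$, so Lemma \ref{asym_dens_lem} yields the existence of $\ds(\curly{S})$.

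For the positivity part, assume in addition $0 \notin \curly{K}_p$ for all $p \in \curly{P}$ and $1 \notin \curly{A}$. The key idea is a sufficient-condition reduction that converts every valuation constraint into a simple divisibility one. Since $0 \notin \curly{K}_p$, for $p \in \curly{P}_1$ we have $1 \in \curly{K}_p$ and $\curly{K}_p \subseteq \{1,2,3,\ldots\}$, so $p \nmid n$ (i.e.\ $v_p(n) = 0$) already forces $v_p(n) \notin \curly{K}_p$; for $p \in \curly{P} \setminus \curly{P}_1$ we have $\curly{K}_p \subseteq \{2,3,\ldots\}$, so $p^2 \nmid n$ (i.e.\ $v_p(n) \leq 1$) likewise suffices. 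Setting
\[
\curly{D} := \curly{A} \,\cup\, \curly{P}_1 \,\cup\, \{p^2 : p \in \curly{P} \setminus \curly{P}_1\},
\]
this gives the inclusion $\curly{S} \supseteq \N \setminus \curly{M}(\curly{D})$.

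To close the argument I verify the hypotheses of Lemma \ref{set_of_mul_lem} for $\curly{D}$: its reciprocal sum is at most $\sum_{a \in \curly{A}} 1/a + \sum_{p \in \curly{P}_1} 1/p + \sum_p 1/p^2 < \infty$, so $\curly{D}$ is thin; and $1 \notin \curly{D}$ since $1 \notin \curly{A}$ while every prime and every prime square is at least $2$. Lemma \ref{set_of_mul_lem} then yields $\ds(\curly{M}(\curly{D})) < 1$, whence $\ds(\curly{S}) \geq 1 - \ds(\curly{M}(\curly{D})) > 0$. The main obstacle I anticipate is that the $\curly{A}$- and $\curly{P}$-constraints are not asymptotically independent in general, so their densities cannot simply be multiplied like Euler factors; the reduction above sidesteps this by packaging all remaining constraints into a single set-of-multiples problem to which Lemma \ref{set_of_mul_lem} applies directly.
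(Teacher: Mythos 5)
Your proposal is correct in substance and follows essentially the same route as the paper: the forbidden conditions $a \mid n$ and $p^k \mid\mid n$ are encoded as residue classes modulo $a$ and modulo $p^{k+1}$, existence of $\ds(\curly{S})$ comes from Lemma \ref{asym_dens_lem}, and positivity comes from the inclusion $\N \setminus \curly{M}(\curly{D}) \subseteq \curly{S}$ with $\curly{D}$ built from $\curly{A}$, $\curly{P}_1$ and the squares $p^2$, together with Lemma \ref{set_of_mul_lem} --- this last part is, up to the inessential restriction to $p \in \curly{P}\setminus\curly{P}_1$, exactly the paper's argument. Where you genuinely differ is the treatment of $\curly{P}_0 := \{p \in \curly{P} : 0 \in \curly{K}_p\}$: the paper first proves the lemma assuming $0 \notin \curly{K}_p$ for all $p$, and then reduces the general case to this one by noting that every $n \in \curly{S}$ is divisible by $\pi := \prod_{p \in \curly{P}_0} p^{k_p}$ and rewriting $n = \pi m$ with shifted sets $\curly{A}^\prime$, $\curly{K}_p^\prime$. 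You instead keep the finitely many $k=0$ conditions inside the sieve (finitely many extra constants, each bounded by $1$, so $\sum_b c_b$ stays finite) and dispose of infinite $\curly{P}_0$ via $\curly{S} = \emptyset$. This streamlines the paper's case analysis and is valid.

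One technical slip should be repaired: your constants $c = (p-1)/p^{k+1}$ do not satisfy the hypothesis of Lemma \ref{asym_dens_lem}, which demands $\curly{S}_b(x) \leq c_b x$ for \emph{all} $x \geq 0$, not merely asymptotically; indeed at $x = p^k$ the set $\{n \leq x : p^k \mid\mid n\}$ has one element while $c x = (p-1)/p < 1$. Use instead $c_{p^{k+1}} := 1/p^k$, as the paper does, via $\#\{n \leq x : p^k \mid\mid n\} \leq \lfloor x/p^k \rfloor \leq x/p^k$; this choice also covers the merged residue $0$ in case $p^{k+1} \in \curly{A}$. Your convergence estimate survives unchanged in structure: the $k=0$ terms contribute at most $\#\curly{P}_0$, the $k=1$ terms at most $\sum_{p \in \curly{P}_1} 1/p$, and the $k \geq 2$ terms at most $\sum_p 1/(p^2-p)$, all finite by thinness of $\curly{A} \cup \curly{P}_1$ and finiteness of $\curly{P}_0$.
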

\begin{proof}
Consider first the case when $0 \notin \curly{K}_p$ for all $p \in \curly{P}$.
We want to use Lemma \ref{asym_dens_lem}. 
For define $\curly{B} := \curly{A} \cup \big\{p^{k+1} : p \in \curly{P},\; k \in \curly{K}_p\big\}$. 
We construct $\Omega_b$ as follows: start with empty $\Omega_b$ for all $b \in \curly{B}$, then throw $0$ into $\Omega_a$ for all $a \in \curly{A}$ and throw $p^k, 2p^k, \ldots, (p-1)p^k$ into $\Omega_{p^{k+1}}$ for all $p \in \curly{P}$ and $k \in \curly{K}_p$.
If $a \in \curly{A} \setminus \big\{p^{k+1} : p \in \curly{P},\; k \in \curly{K}_p\big\}$ then
$\curly{S}_{a}(x) \leq x / a$, so set $c_a := 1 / a$.
On the other hand, if $p \in \curly{P}$ and $k \in \curly{K}_p$ then $\curly{S}_{p^{k+1}}(x) \leq x / p^k$, so set $c_{p^{k+1}} := 1 / p^k$.
Since $\curly{A} \cup \curly{P}_1$ is thin, it follows
\begin{align*}
\sum_{b \in \curly{B}} c_b &\leq \sum_{a \in \curly{A}} \frac1{a} + \sum_{p \in \curly{P}_1} \sum_{k \in \curly{K}_p} \frac1{p^k} + \sum_{p \in \curly{P} \setminus \curly{P}_1} \sum_{k \in \curly{K}_p} \frac1{p^k} \\
&\leq \sum_{a \in \curly{A}} \frac1{a} + \sum_{p \in \curly{P}_1} \sum_{k = 1}^\infty \frac1{p^k} + \sum_{p \in \curly{P} \setminus \curly{P}_1} \sum_{k = 2}^\infty \frac1{p^k} \\
&\leq \sum_{a \in \curly{A}} \frac1{a} + \sum_{p \in \curly{P}_1} \frac1{p-1} + \sum_{p} \frac1{p^2-p} < \infty ,
\end{align*}
and thus Lemma \ref{asym_dens_lem} implies that $\curly{S}$ has an asymptotic density, since $n \in \curly{S}$ if and only if $(n \bmod b) \notin \Omega_b$ for all $b \in \curly{B}$.
In particular, if $1 \notin \curly{A}$, then the set $\curly{C} := \curly{A} \cup \curly{P}_1 \cup \{p^2 : p \in \curly{P}\}$ is thin and also $1 \notin \curly{C}$.
Hence, it follows from Lemma \ref{set_of_mul_lem} that $\curly{M}(\curly{C})$ has asymptotic density less than $1$.
On the other hand, $(\N \setminus \curly{M}(\curly{C})) \subseteq \curly{S}$, so $\ds(\curly{S}) > 0$.

Now consider the case when $0 \in \curly{K}_p$ for some $p \in \curly{P}$ and define $\curly{P}_0 := \{p \in \curly{P} : 0 \in \curly{K}_p\}$.
It results that $p \mid n$ for all $p \in \curly{P}_0$ and $n \in \curly{S}$.
If $\curly{P}_0$ is infinite, then $\curly{S}$ is empty, and hence has asymptotic density zero.
On another hand, if $\N \subseteq \curly{K}_p$ for some $p \in \curly{P}_0$ then $\curly{S}$ is empty again and the claim follows.
So we are left with the case when $\curly{P}_0$ is finite and for each $p \in \curly{P}_0$ there exists a positive integer $k_p$ such that $\{0,1,\ldots,k_p-1\} \subseteq \curly{K}_p$, but $k_p \notin \curly{K}_p$.
Therefore, any $n \in \curly{S}$ is divisible by $\pi := \prod_{p \in \curly{P}_0} p^{k_p}$.
Define
\begin{equation*}
\curly{A}^\prime := \left\{\frac{a}{\gcd(a,\pi)} : a \in \curly{A} \right\} ,
\end{equation*}
and let $\curly{K}_p^\prime := \{k - k_p : k \in \curly{K}_p, \; k > k_p \}$ for $p \in \curly{P}_0$ and $\curly{K}_p^\prime := \curly{K}_p$ for $p \in \curly{P} \setminus \curly{P}_0$.
Then $n \in \curly{S}$ if and only if $n = \pi m$ for a positive integer $m$ such that neither $a \mid m$ for some $a \in \curly{A}^\prime$, nor $p^k \mid\mid m$ for some $p \in \curly{P}$ and $k \in \curly{K}_p^\prime$.
Note that $\curly{A}^\prime$ is thin since $\curly{A}$ is thin.
Furthermore, let $\curly{P}_1^\prime := \{p \in \curly{P} : 1 \in \curly{K}_p^\prime\}$, then $\curly{P}_1^\prime \subseteq \curly{P}_0 \cup \curly{P}_1$, so $\curly{P}_1^\prime$ is thin, since $\curly{P}_0$ is finite and $\curly{P}_1$ is thin by hypothesis.
This yields that $\curly{A}^\prime \cup \curly{P}_1^\prime$ is itself thin.
Since $0 \notin \curly{K}_p^\prime$ for all $p \in \curly{P}$, it then follows from the first part of the proof that $\curly{S}$ possesses an asymptotic density.
\end{proof}

Now we show that the support of any multiplicative arithmetic function has an asymptotic density and we give a way to know if this density is zero or positive.
This is particularly useful if one needs to apply Theorem \ref{main_theorem}.

\begin{lem}\label{mul_fun_charac}
If $\nu$ is a multiplicative arithmetic function then $\supp(\nu)$ has an asymptotic density and specifically
\begin{equation}\label{mul_dens}
\ds(\supp(\nu)) = \prod_{p} \left(1 - \frac1{p}\right) \!\!\sum_{\substack{k=0 \\ p^k \in \supp(\nu)}}^\infty \frac1{p^k} .
\end{equation}
In particular, $\ds(\supp(\nu)) > 0$ if and only if $\sum_{p \notin \supp(\nu)} 1/p < \infty$.
\end{lem}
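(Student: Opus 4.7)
Since $\nu$ is multiplicative and $\nu(1) = 1$, a positive integer $n$ belongs to $\supp(\nu)$ if and only if $p^{v_p(n)} \in \supp(\nu)$ for every prime $p$, where $v_p$ denotes the $p$-adic valuation. For each prime $p$ set
\begin{equation*}
\alpha_p := \left(1 - \frac{1}{p}\right) \sum_{\substack{k = 0 \\ p^k \in \supp(\nu)}}^\infty \frac{1}{p^k} ,
\end{equation*}
so that the right-hand side of (\ref{mul_dens}) is $\prod_p \alpha_p$. A direct calculation yields $0 < \alpha_p \leq 1$ with $1 - \alpha_p \leq 1/p^2$ when $p \in \supp(\nu)$, and $1/p - 1/p^2 \leq 1 - \alpha_p \leq 1/p$ when $p \notin \supp(\nu)$. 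By the standard criterion $\prod_p \alpha_p > 0 \Leftrightarrow \sum_p (1 - \alpha_p) < \infty$, the product is positive precisely when $\curly{P}_1 := \{p : p \notin \supp(\nu)\}$ is thin; this reduces the second assertion of the lemma to the first.

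For any finite set of primes $\curly{Q}$, let
\begin{equation*}
\curly{T}_\curly{Q} := \{n \in \N : p^{v_p(n)} \in \supp(\nu) \;\text{ for every } p \in \curly{Q}\} .
\end{equation*}
Since $\{n \in \N : v_p(n) = k\}$ has density $(1-1/p)/p^k$, a truncation of each valuation at a height $K \to \infty$ combined with the Chinese Remainder Theorem gives $\ds(\curly{T}_\curly{Q}) = \prod_{p \in \curly{Q}} \alpha_p$. Clearly $\supp(\nu) \subseteq \curly{T}_\curly{Q}$.

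I would then distinguish two cases. If $\curly{P}_1$ is \emph{not} thin, then $\prod_{p \in \curly{Q}} \alpha_p \to 0$ as $\curly{Q}$ exhausts finite subsets of $\curly{P}_1$, whence $\overline{\ds}(\supp(\nu)) = 0 = \prod_p \alpha_p$. If $\curly{P}_1$ \emph{is} thin, then Lemma \ref{asym_dens_lem2}, applied with $\curly{A} := \emptyset$, $\curly{P}$ the set of all primes, and $\curly{K}_p := \{k \in \N_0 : p^k \notin \supp(\nu)\}$ (so that $0 \notin \curly{K}_p$ since $\nu(1) = 1$, and $\{p : 1 \in \curly{K}_p\} = \curly{P}_1$ is thin by hypothesis), guarantees the existence of $\ds(\supp(\nu))$. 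To identify its value with $\prod_p \alpha_p$, I bound
\begin{equation*}
0 \leq \curly{T}_\curly{Q}(x) - \supp(\nu)(x) \leq \sum_{p \notin \curly{Q}} \sum_{k \in \curly{K}_p} \#\{n \leq x : p^k \mid\mid n\} \leq x \sum_{p \notin \curly{Q}} \sum_{k \in \curly{K}_p} \frac{1}{p^k} ,
\end{equation*}
observe that the double series converges (its $k=1$ contribution is $\sum_{p \in \curly{P}_1} 1/p < \infty$, and the remainder is dominated by $\sum_p 1/p^2$), divide by $x$, send $x \to \infty$, and then let $\curly{Q}$ exhaust all primes.

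The main obstacle is the case in which $\curly{P}_1$ fails to be thin, where Lemma \ref{asym_dens_lem2} does not apply and one must argue directly from the inclusion $\supp(\nu) \subseteq \curly{T}_\curly{Q}$; care is also required to make the Chinese Remainder Theorem computation of $\ds(\curly{T}_\curly{Q})$ rigorous, since $\{n : v_p(n) \notin \curly{K}_p\}$ need not be a congruence condition modulo any single integer when $\curly{K}_p$ is infinite.
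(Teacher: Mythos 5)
Your argument is essentially correct, but it takes a genuinely different route from the paper. The paper's proof is a two-line appeal to an off-the-shelf mean value theorem: it quotes Tenenbaum's result that a multiplicative function with values in $[0,1]$ has mean value equal to its Euler product, applies it to the indicator function of $\supp(\nu)$ (which is multiplicative) to get \eqref{mul_dens} at once, and then derives the positivity criterion by rewriting the local factors --- essentially the same $\alpha_p$ estimates you carry out ($1-\alpha_p \ll 1/p^2$ for $p\in\supp(\nu)$, $1-\alpha_p \asymp 1/p$ for $p\notin\supp(\nu)$, then the criterion $\prod_p \alpha_p>0 \Leftrightarrow \sum_p(1-\alpha_p)<\infty$). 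You instead build a self-contained elementary proof from the paper's own sieve machinery: in the case $\sum_{p\notin\supp(\nu)}1/p<\infty$ you correctly apply Lemma \ref{asym_dens_lem2} with $\curly{A}=\emptyset$, $\curly{K}_p=\{k\ge 1: p^k\notin\supp(\nu)\}$ to get existence (no circularity, since that lemma precedes this one), and you identify the value by comparing $\supp(\nu)$ with the finitely-restricted sets $\curly{T}_{\curly{Q}}$ and controlling the tail $\sum_{p\notin\curly{Q}}\sum_{k\in\curly{K}_p}p^{-k}$; in the complementary case you get $\overline{\ds}(\supp(\nu))\le \prod_{p\in\curly{Q}}\alpha_p\to 0$, so both sides of \eqref{mul_dens} are $0$. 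What the paper's route buys is brevity, at the cost of importing a nontrivial external theorem; your route buys self-containedness and makes explicit exactly which features of multiplicativity are used, at the cost of length and of the two routine points you yourself flag, which do need to be written out: the truncation-plus-CRT computation showing $\ds(\curly{T}_{\curly{Q}})=\prod_{p\in\curly{Q}}\alpha_p$ (truncate each valuation at height $K$, note the exceptional set has counting function at most $x\sum_{p\in\curly{Q}}p^{-K-1}$, then let $K\to\infty$), and the remark that in the non-thin case the right-hand product is interpreted as the limit of its partial products, which is $0$. With those details filled in, your proof is complete and valid.
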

\begin{proof}
From a result of G. Tenenbaum \cite[Theorem 11, p. 48]{tenenbaum95}, if $f$ is a multiplicative arithmetic function with values in $[0,1]$ then
\begin{equation*}
\lim_{x \to \infty} \frac1{x} \sum_{n \leq x} f(n) = \prod_{p} \left(1 - \frac1{p}\right) \sum_{k=0}^\infty \frac{f(p^k)}{p^k} .
\end{equation*}
Equation (\ref{mul_dens}) follows choosing $f$ as the indicator function of $\supp(\nu)$, which is multiplicative.
After some calculations, we obtain
\begin{equation*}
\ds(\supp(\nu)) = \prod_{p \in \supp(\nu)} \left(1 - \frac{c_p}{p^2}\right) \prod_{p \notin \supp(\nu)} \left(1 - \frac1{p} + \frac{c_p}{p^2}\right) ,
\end{equation*}
where $c_p \in [0,1]$ for all prime numbers $p$. 
In conclusion, $\ds(\supp(\nu)) > 0$ if and only if $\sum_{p \notin \supp(\nu)} 1/p < \infty$.
Regarding the convergence of infinite products, see \cite[Ch. 8]{krantz08}.
\end{proof}

The next lemma is the key to the proof of Theorem \ref{main_theorem}. It is a generalization of \cite[Lemma 2.3]{PS13}, hence the first parts of their proof are similar.

\begin{lem}\label{powered_lem}
Let $\nu$ be a multiplicative arithmetic function with support of positive asymptotic density.
Let $\curly{A}$ be a thin set of positive integers. If $\curly{T} \subseteq \curly{S} \subseteq \curly{A}$, where $\curly{S}$ is finite, then the set $\curly{C}$ of positive integers $n$ for which both:
\begin{enumerate}
\item $\curly{S} = \{d \in \curly{A} : d \mid n\}$; and
\item $\curly{T} = \{d \in \curly{S} : d \mid n, \; \nu(n/d) \neq 0\}$,
\end{enumerate}
has an asymptotic density.
\end{lem}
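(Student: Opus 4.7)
The plan is to reduce the claim, via finite inclusion--exclusion on $\curly{S} \setminus \curly{T}$, to establishing the existence of the asymptotic density of
\[
\curly{D}_{\curly{U}} := \{n \in \N : d \mid n \; \forall d \in \curly{S}, \; a \nmid n \; \forall a \in \curly{A} \setminus \curly{S}, \; \nu(n/d) \neq 0 \; \forall d \in \curly{U}\}
\]
for each $\curly{U}$ with $\curly{T} \subseteq \curly{U} \subseteq \curly{S}$. Indeed, expanding
\[
\mathbf{1}_{\curly{C}}(n) = \mathbf{1}_{\curly{D}_{\curly{T}}}(n) \prod_{d \in \curly{S} \setminus \curly{T}} \bigl(1 - \mathbf{1}[\nu(n/d) \neq 0]\bigr)
\]
yields $\curly{C}(x) = \sum_{R \subseteq \curly{S} \setminus \curly{T}} (-1)^{|R|} \curly{D}_{\curly{T} \cup R}(x)$, a finite signed sum since $\curly{S}$ is finite, so the existence of each $\ds(\curly{D}_{\curly{T} \cup R})$ implies the existence of $\ds(\curly{C})$.

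For a given $\curly{U}$, I would encode the defining conditions of $\curly{D}_{\curly{U}}$ into the hypotheses of Lemma \ref{asym_dens_lem2}. Put $\sigma := \lcm(\curly{S})$ and $\curly{K}_p^{\nu} := \{k \in \N_0 : \nu(p^k) = 0\}$, and apply the lemma with the thin set $\curly{A} \setminus \curly{S}$ in the role of $\curly{A}$, with $\curly{P}$ the set of all primes, and with
\[
\curly{K}_p := \{0, 1, \ldots, v_p(\sigma) - 1\} \cup \bigcup_{d \in \curly{U}} \bigl(v_p(d) + \curly{K}_p^{\nu}\bigr).
\]
The first summand forces $v_p(n) \geq v_p(\sigma)$ for every prime $p$, i.e.\ $\sigma \mid n$ (equivalently, $d \mid n$ for every $d \in \curly{S}$); the second, by the multiplicativity of $\nu$, records that $\nu(n/d) \neq 0$ is equivalent to $v_p(n) - v_p(d) \notin \curly{K}_p^{\nu}$ for every $p$, hence to $v_p(n) \notin v_p(d) + \curly{K}_p^{\nu}$.

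To apply Lemma \ref{asym_dens_lem2} I must check that $(\curly{A} \setminus \curly{S}) \cup \curly{P}_1$ is thin, where $\curly{P}_1 := \{p : 1 \in \curly{K}_p\}$. The primes $p$ dividing $\sigma$ (finitely many) contribute a finite subset of $\curly{P}_1$. For primes $p \nmid \sigma$ one has $v_p(\sigma) = 0$ and $v_p(d) = 0$ for every $d \in \curly{S}$, so $\curly{K}_p = \curly{K}_p^{\nu}$ and $1 \in \curly{K}_p$ iff $\nu(p) = 0$; by Lemma \ref{mul_fun_charac} and the hypothesis $\ds(\supp(\nu)) > 0$, this last set of primes is thin. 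Since $\curly{A}$ is thin by assumption, $(\curly{A} \setminus \curly{S}) \cup \curly{P}_1$ is thin, and Lemma \ref{asym_dens_lem2} delivers $\ds(\curly{D}_{\curly{U}})$.

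The chief delicacy is the bookkeeping of the second step: one must verify that the single forbidden-exponent list $\curly{K}_p$ at each prime captures, simultaneously and without extraneous constraints, both the divisibility condition $\sigma \mid n$ and the conjunction of non-vanishing conditions $\nu(n/d) \neq 0$ for $d \in \curly{U}$. The crucial input that makes this feasible is the multiplicativity of $\nu$, which lets the global condition $\nu(m) \neq 0$ be read off prime by prime from the valuations of $m$; once that reformulation is in hand, everything else is just tracking the shifts by $v_p(d)$ and letting Lemma \ref{asym_dens_lem2} do the heavy lifting.
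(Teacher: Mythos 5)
Your proof is correct and follows essentially the same route as the paper: the identical inclusion--exclusion over $\curly{S} \setminus \curly{T}$, then an application of Lemma \ref{asym_dens_lem2} with the thinness of $\curly{P}_1$ supplied by Lemma \ref{mul_fun_charac}. The only difference is bookkeeping: you absorb the divisibility condition $\lcm(\curly{S}) \mid n$ into the forbidden exponent sets $\curly{K}_p$ (relying on the case $0 \in \curly{K}_p$ of Lemma \ref{asym_dens_lem2}), whereas the paper first writes $n = Lq$ with $L := \lcm(\curly{S})$ and applies that lemma to $q$ --- which is precisely the reduction the lemma's own proof performs internally.
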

\begin{proof}
Let $\chi_\nu$ be the indicator function of $\supp(\nu)$.
Define the arithmetic function $\chi$ by taking
\begin{equation}\label{indic_eq}
\chi(n) := \prod_{d \in \curly{T}} \chi_\nu(n/d) \prod_{c \in \curly{S} \setminus \curly{T}}(1-\chi_\nu(n/c)) ,
\end{equation}
for each $n$ satisfying condition (i), and let $\chi(n) := 0$ otherwise.
Then $\chi$ is the indicator function of $\curly{C}$. 
Moreover, when $n$ satisfies (i), expanding the second product in \eqref{indic_eq} we obtain that
\begin{equation*}
\chi(n) = \sum_{\curly{T} \subseteq \curly{U} \subseteq \curly{S}} (-1)^{|\curly{U}|-|\curly{T}|} \prod_{e \in \curly{U}} \chi_\nu(n/e) .
\end{equation*}
So using $'$ to denote a sum restricted to integers $n$ satisfying (i), we find that
\begin{align}\label{count_eq}
\curly{C}(x) = \sum_{n \leq x} \chi(n) &=  \sum_{\curly{T} \subseteq \curly{U} \subseteq \curly{S}} (-1)^{|\curly{U}|-|\curly{T}|} \sumprime_{n \leq x} \prod_{e \in \curly{U}} \chi_\nu(n/e) ,
\end{align}
for all $x > 0$.
Dividing equation (\ref{count_eq}) by $x$ and letting $x \to \infty$, it suffices to prove that for each set $\curly{U}$ with $\curly{T} \subseteq \curly{U} \subseteq \curly{S}$, the set
\begin{equation*}
\curly{V} := \{n \in \N : \text{$n$ satisfies (i), $n / e \in \supp(\nu)$ for all $e \in \curly{U}$}\} 
\end{equation*}
has an asymptotic density.

If $n$ satisfies (i) then $L \mid n$, where $L := \lcm\{d \in \curly{S}\}$.
In fact, (i) holds for $n$ if and only if $n = Lq$ for some $q \in \N$ such that $a / \gcd(a, L) \nmid q$ for all $a \in \curly{A} \setminus \curly{S}$.
On the other hand, $n / e \in \supp(\nu)$ for $e \in \curly{U}$ if and only if there must exist no prime $p$ and positive integer $k$ such that $p^k \mid\mid n / e$ and $\nu(p^k) = 0$.
Now, in view of using Lemma \ref{asym_dens_lem2}, define
\begin{equation*}
\curly{B} := \left\{\frac{a}{\gcd(a, L)} : a \in \curly{A} \setminus \curly{S} \right\} ,
\end{equation*}
and let $\curly{P}$ be the set of prime numbers $p$ such that $\nu(p^k) = 0$ for some $k \in \N$.
Being that $\curly{A}$ is a thin set it follows that also $\curly{B}$ is a thin set.
Moreover, define
\begin{equation*}
\curly{K}_p := \big\{ h - j : h \in \N, \, j \in \N_0, \, e \in \curly{U}, \, \nu(p^h) = 0, \, p^j \mid\mid L / e \big\} \cap \N_0 .
\end{equation*}
So, it follows that $n = Lq \in \curly{V}$ if and only if neither $b \mid q$ for some $b \in \curly{B}$, nor $p^k \mid\mid q$ for some $p \in \curly{P}$ and $k \in \curly{K}_p$.
Let $\curly{P}_1 := \{p \in \curly{P} : 1 \in \curly{K}_p\}$. If $p \in \curly{P}_1$ then there are only two possible cases: $p \notin \supp(\nu)$ and $p \nmid L / e$ for some $e \in \curly{U}$; or $\nu(p^{j+1}) = 0$ and $p^j \mid \mid L / e$ for some $j \in \N$, $e \in \curly{U}$ (there are only a finite number of such primes, because $\curly{U}$ is finite).
Since $\nu$ has support of positive asymptotic density, from Lemma \ref{mul_fun_charac} it results that $\sum_{p \notin \supp(\nu)} 1 / p < \infty$, so by the previous consideration $\curly{P}_1$ is a thin set.
In conclusion, $\curly{B} \cup \curly{P}_1$ is thin and from Lemma \ref{asym_dens_lem2} it follows that $\curly{V}$ possesses an asymptotic density.
This completes the proof.
\end{proof}

\section{Proofs of Theorem \ref{main_theorem} and Corollary \ref{uncert_cor}}

The idea in the proof of Theorem \ref{main_theorem} is the same as for Theorem \ref{old_theorem}, but the claim is now strengthened and the proof is simplified by Lemma \ref{asym_dens_lem2}.
In particular, it is no longer necessary, with the new approach, to split the proof into two parts, as was done for Theorem \ref{old_theorem}.

We call two elements $n_1$ and $n_2$ of $\supp(g)$ equivalent if they share the same set $\curly{S}$ of divisors from $\supp(f)$ and for $d \in \curly{S}$ it holds $\nu(n_1 / d) \neq 0$ if and only if $\nu(n_2 / d) \neq 0$.
Actually, this is an equivalence relation, with equivalence classes $\curly{A}_1, \curly{A}_2, \ldots$
Then to any $\curly{A}_i$ there correspond a nonempty set $\curly{S}_i$ and a subset $\curly{T}_i$ of $\curly{S}_i$ such that $n \in \curly{A}_i$ if and only if $\{d \in \supp(f) : d \mid n\} = \curly{S}_i$ and $\{d \in \curly{S}_i : \nu(n / d) \neq 0\} = \curly{T}_i$.
Moreover, $\curly{T}_i \subseteq \curly{S}_i \subseteq \supp(f)$ with $\supp(f)$ thin by hypotheses and $\curly{S}_i$ finite, since it is a set of divisors of a positive integer.
It follows from Lemma \ref{powered_lem} that $\curly{A}_i$ possesses an asymptotic density.
On the other hand, 
\begin{equation*}
\supp(g) = \bigcup_{i=1}^\infty \curly{A}_i ,
\end{equation*}
with disjoint union.
If there are only finitely many $\curly{A}_i$ then it follows immediately that $\ds(\supp(g))$ exists, since the asymptotic density if finitely additive.
If instead there are infinitely many $\curly{A}_i$ then define 
\begin{equation*}
m_k := \min_{i > k} \max(\curly{S}_i)
\end{equation*}
and observe that $m_k \to \infty$ as $k \to \infty$.
If $n \in \bigcup_{i > k} \curly{A}_i$ then $\{d \in \supp(f) : d \mid n\} = \curly{S}_i$ for some integer $i > k$.
So $n$ has a divisor $d \in \supp(f)$ with $d \geq m_k$ and as a consequence
\begin{equation}\label{sup_ds_bound}
\overline{\ds}\!\!\left(\bigcup_{i > k} \curly{A}_i\right) \leq \sum_{\substack{d \in \supp(f) \\ d \geq m_k}} \frac1{d} .
\end{equation}
As $k \to \infty$ the right-hand side of equation (\ref{sup_ds_bound}) tends to zero, since $\supp(f)$ is thin.
Therefore, it follows that $\supp(g)$ has an asymptotic density (see \cite[Lemma 2.1]{PS13}).

Now, we want to prove that $\ds(\supp(g)) > 0$.
Since $f$ does not vanish identically, $\supp(f)$ has a minimum $d$.
We claim that a positive portion of positive integers $n$
satisfies the following conditions: $n$ has only $d$ as a divisor from $\supp(f)$ and $n / d \in \supp(\nu)$; to the effect that $g(n) = f(d) \,\nu(n / d) \neq 0$, i.e., $n \in \supp(g)$.
Define
\begin{equation*}
\curly{B} := \left\{\frac{b}{\gcd(b,d)} : b \in \supp(f), \, b \neq d \right\} ,
\end{equation*}
and let $\curly{P}$ be the set of prime numbers $p$ such that $\nu(p^k) = 0$ for some $k \in \N$, for each $p \in \curly{P}$ set $\curly{K}_p := \{k \in \N : \nu(p^k) = 0\}$.
Let $\curly{V}$ be the set of positive integers $m$ such that neither $b \mid m$ for some $b \in \curly{B}$, nor $p^k \mid\mid m$ for some $p \in \curly{P}$ and $k \in \N$.
If $m \in \curly{V}$ then $n = dm$ has $d$ as his only divisor from $\supp(f)$ and $n / d = m \in \supp(\nu)$.
Note that $\curly{B}$ is a thin set, since $\supp(f)$ is thin. 
Thanks to Lemma \ref{mul_fun_charac} it results that $\curly{P}_1 := \{p \in \curly{P} : 1 \in \curly{K}_p\}$ is thin, so $\curly{B} \cup \curly{P}_1$ is thin.
Moreover, $0 \notin \curly{K}_p$ for all $p \in \curly{P}$ and $1 \notin \curly{B}$, so it follows from Lemma \ref{asym_dens_lem2} that $\ds(\curly{V}) > 0$ and finally $\ds(\supp(g)) > 0$.

At this point, the proof of Corollary \ref{uncert_cor} is immediate.
By partial summation one can show that a set of positive asymptotic density is never thin.
Let $(f,g)$ be a nonzero $\nu$-pair.
If $\supp(f)$ is thin then from Theorem \ref{main_theorem} it follows that $\supp(g)$ has positive asymptotic density and hence it is not thin.
Otherwise, if $\supp(g)$ is thin, note that $\nu^{-1}$ is a multiplicative function because it is the Dirichlet inverse of a multiplicative function.
Furthermore, $\nu^{-1}(p) = -\nu(p)$ for all primes $p$.
Thus, we get by Lemma \ref{mul_fun_charac}
\begin{equation*}
\sum_{p \notin \supp(\nu^{-1})} \frac1{p} = \sum_{p \notin \supp(\nu)} \frac1{p} < \infty ,
\end{equation*}
and hence $\supp(\nu^{-1})$ has positive asymptotic density.
From Theorem \ref{main_theorem}, since $(g,f)$ is a $v^{-1}$-pair, it follows that $\supp(f)$ has positive asymptotic density and so it is not thin.

\section{Proof of Theorem \ref{mul_bound}}

Since $\nu$ is a multiplicative arithmetic function with support of positive asymptotic density, it follows from Lemma \ref{mul_fun_charac} that $\sum_{p \notin \supp(\nu)} 1 / p < \infty$, and so
\begin{equation*}
C_\nu^\prime := \frac{6}{\pi^2} \prod_{p \notin \supp(\nu)} \left(1 + \frac1{p}\right)^{-1}
\end{equation*}
is a well-defined positive real constant.
On the one hand, since $f$ is multiplicative, it is easily seen that
\begin{equation*}
\sum_{n \in \supp(f)} \frac1{n} \geq \prod_{p \in \supp(f)} \left(1 + \frac1{p}\right) .
\end{equation*}
On the other hand, since $g$ is multiplicative too, we get, again by Lemma \ref{mul_fun_charac}, that
\begin{align*}
\ds(\supp(g)) &= \prod_{p} \left(1 - \frac1{p}\right) \!\! \sum_{\substack{k=0 \\ p^k \in \supp(g)}}^\infty \frac1{p^k} \geq \prod_{p \in \supp(g)} \left(1 - \frac1{p^2}\right) \prod_{p \notin \supp(g)} \left(1 - \frac1{p}\right) \\
&= \prod_{p} \left(1 - \frac1{p^2}\right) \prod_{p \notin \supp(g)} \left(1 + \frac1{p}\right)^{-1} = \frac{6}{\pi^2} \prod_{p \notin \supp(g)} \left(1 + \frac1{p}\right)^{-1} .
\end{align*}
Finally, $g(p) = \nu(p) + f(p)$ for all prime numbers $p$, so we obtain
\begin{align*}
\ds(\supp(g)) &= \frac{6}{\pi^2} \prod_{\substack{p \notin \supp(\nu) \\ p \notin \supp(f)}} \left(1 + \frac1{p}\right)^{-1} \prod_{\substack{p \in \supp(f) \\ f(p) \neq -\nu(p)}} \left(1 + \frac1{p}\right)^{-1} \\
&\geq C_\nu^\prime \prod_{p \in \supp(f)} \left(1 + \frac1{p}\right)^{-1} \geq \frac{C_\nu^\prime}{\sum_{n \in \supp(f)} 1 / n} .
\end{align*}
This completes the proof.

\begin{rmk}
Theorem \ref{mul_bound} 
is no longer true if the hypothesis that $f$ is multiplicative, or equivalently that $g$ is multiplicative, is dropped.
For example, fix an integer $d \geq 2$ and take $f$ as the indicator function of the singleton $\{d\}$, to the effect that $f$ is not multiplicative.
On the one hand, it results $\ds(\supp(g)) \leq 1 / d$.
On the other hand, obviously $\sum_{n \in \supp(f)} 1 / n = 1 / d$.
Thus, it must be $C_\nu \leq 1 / d^2$.
Due to the arbitrariness of $d$ if follows that $C_\nu$ cannot be positive.
\end{rmk}

An interesting question might be the evaluation of the best constant $C_\nu$ in Theorem \ref{mul_bound}, i.e., the infimum
\begin{equation*}
C_\nu := \inf_{(f,g)} \; \ds(\supp(g)) \sum_{n \in \supp(f)} \frac1{n} ,
\end{equation*}
over all $\nu$-pairs $(f,g)$ of multiplicative arithmetic functions with $f$ of thin support.
Theorem \ref{mul_bound} gives us a lower bound for $C_\nu$.
For an upper bound, notice that setting $f = \epsilon$ we obtain $C_\nu \leq \ds(\supp(\nu))$.
Thus, in particular, we have $C_\mu = 6 / \pi^2$.

\section{Proof of Theorem \ref{second_theorem}}

For each $y \geq 0$ define the function $g_y$ by setting
\begin{equation*}
g_y(n) := \sum_{\substack{d \mid n \\ d \leq y}} f(d) \, \nu(n / d) ,
\end{equation*}
for all $n \in \N$. 
We can regard $g_y$ as a sort of ``truncated Dirichlet convolution'' of $f$ and $\nu$.
The following lemma holds.

\begin{lem}
If $\sum_{n=1}^\infty |f(n)| / n < \infty$ then:
\begin{enumerate}
\item For all $y \geq 0$ the function $|g_y|$ has a finite mean value $\lambda_y$.
\item $\lambda_y$ tends to a finite limit $\lambda$ as $y \to \infty$.
\item $|g|$ has mean value $\lambda$.
\end{enumerate}
\end{lem}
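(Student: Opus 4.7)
The plan is to treat parts (i)--(iii) in order, with part (i) bearing the bulk of the work and parts (ii)--(iii) following by standard telescoping and truncation arguments. For part (i), fix $y$ and set $L := \lcm(\supp(f) \cap [1,y])$, a well-defined positive integer since $\supp(f) \cap [1,y]$ is finite. Write every $n \in \N$ as $n = L'm$, where $L' := \prod_{p \mid L} p^{v_p(n)}$ is the $L$-part of $n$ and $\gcd(m, L) = 1$. If $d \in \supp(f) \cap [1,y]$ then $d \mid L$, so $d \mid n$ if and only if $d \mid L'$; in that case $n/d = (L'/d)\,m$ with $\gcd(L'/d, m) = 1$, and multiplicativity of $\nu$ gives $\nu(n/d) = \nu(L'/d)\,\nu(m)$. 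Hence
\begin{equation*}
g_y(n) = \nu(m)\,\gamma(L'), \qquad \gamma(L') := \!\!\!\!\sum_{\substack{d \in \supp(f) \cap [1,y] \\ d \,\mid\, L'}}\!\!\! f(d)\,\nu(L'/d) ,
\end{equation*}
and $|g_y(n)| = |\gamma(L')|\,|\nu(m)|$ factors cleanly. Summing yields $\sum_{n \leq x}|g_y(n)| = \sum_{L'}|\gamma(L')|\sum_{m \leq x/L',\,\gcd(m,L)=1}|\nu(m)|$, with $L'$ running over $L$-smooth integers. Since $|\nu|$ times the indicator of coprimality to $L$ is a bounded non-negative multiplicative function, Tenenbaum's theorem (as applied in the proof of Lemma \ref{mul_fun_charac}) gives it a mean value $\beta \geq 0$, so the inner sum is $\beta(x/L')(1+o(1))$ for each fixed $L'$. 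The outer series $\sum_{L'}|\gamma(L')|/L'$ converges: bounding $|\gamma(L')| \leq M\sum_{d \mid L',\,d \leq y}|f(d)|$ with $M := \sup_n|\nu(n)|$, swapping the order of summation, and using $\sum_{L' \text{ $L$-smooth},\,d \mid L'}1/L' = (1/d)\prod_{p \mid L}(1-1/p)^{-1}$, one majorizes it by $M\prod_{p \mid L}(1-1/p)^{-1}\sum_n |f(n)|/n$. A standard split at some $L' \leq Y$ then combines the per-$L'$ asymptotics uniformly and identifies the mean value $\lambda_y := \beta\sum_{L'}|\gamma(L')|/L'$.

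For parts (ii) and (iii), the basic estimate is $|g_{y'}(n) - g_y(n)| \leq M\sum_{y < d \leq y',\,d \mid n}|f(d)|$ (and the analogue with $g_{y'}$ replaced by $g$), whose mean value is at most $M\sum_{y < d \leq y'}|f(d)|/d$. For (ii), the reverse triangle inequality gives $|\lambda_{y'} - \lambda_y| \leq M\sum_{y < d \leq y'}|f(d)|/d$, so summability of $|f(n)|/n$ makes $(\lambda_y)$ Cauchy and furnishes a finite limit $\lambda$. For (iii), the same estimate forces
\begin{equation*}
\limsup_{x \to \infty}\frac{1}{x}\sum_{n \leq x}|g(n)| - \liminf_{x \to \infty}\frac{1}{x}\sum_{n \leq x}|g(n)| \leq 2M\sum_{d > y}\frac{|f(d)|}{d} ,
\end{equation*}
the right-hand side of which tends to $0$ as $y \to \infty$; squeezing identifies the mean value of $|g|$ as $\lim_y \lambda_y = \lambda$.

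The main obstacle lies in part (i), specifically in uncovering the factorization $|g_y(n)| = |\gamma(L')|\,|\nu(m)|$ by conditioning on the $L$-part of $n$. Once that is in place, the problem reduces to a mean-value computation for a bounded non-negative multiplicative function on a coprimality class, while parts (ii)--(iii) become routine Cauchy- and dominated-convergence-type arguments that use only $\sum_n|f(n)|/n < \infty$ and the boundedness of $\nu$.
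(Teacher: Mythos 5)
Your reduction for part (i) --- factoring $n = L'm$ with $L := \lcm\big(\supp(f)\cap[1,y]\big)$, so that $|g_y(n)| = |\gamma(L')|\,|\nu(m)|$, together with the convergence of $\sum_{L'}|\gamma(L')|/L'$ and the truncation estimates for (ii) and (iii) --- is correct as far as it goes, and it is a more self-contained route than the paper's, which simply defers to the argument of \cite[Lemma 4.1]{PS13} adapted via Lemma \ref{powered_lem}; your (ii) and (iii) are exactly the standard truncation argument using the boundedness of $\nu$, as in the paper. The genuine gap is at the single analytic input your argument needs: the existence of the mean value $\beta$ of $m \mapsto |\nu(m)|\,\mathbf{1}_{\gcd(m,L)=1}$. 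You justify it by the theorem of Tenenbaum as used in the proof of Lemma \ref{mul_fun_charac}, but that theorem applies to multiplicative functions with values in $[0,1]$, while Theorem \ref{second_theorem} only assumes $\nu$ bounded; if $M := \sup_n |\nu(n)| > 1$ the cited result does not apply, and you cannot rescale, since $|\nu|/M$ is no longer multiplicative. Note moreover that in the case $f=\epsilon$ (so $g_y = \nu$ for $y \geq 1$ and $L = 1$) the fact you invoke \emph{is} assertion (i) of the lemma you are proving, so at this point the argument is not a reduction to something already available in the paper.

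The missing fact is true, but it needs an argument. Global boundedness of the multiplicative function $|\nu|$ forces $\prod_{p} \max\big(1, \sup_k |\nu(p^k)|\big) \leq M$, hence $\sum_p \big(\max(1,\sup_k |\nu(p^k)|) - 1\big) < \infty$; one can then compare $h := |\nu|\,\mathbf{1}_{\gcd(\cdot,L)=1}$ with its $[0,1]$-valued minorant $h_1$ defined by $h_1(p^k) := \min(h(p^k),1)$, writing $h = h_1 * r$ with $r$ multiplicative, supported on integers built from the finitely-or-summably exceptional primes, and satisfying $\sum_n |r(n)|/n < \infty$, after which a Wintner-type argument transfers the mean value of $h_1$ (now legitimately given by the quoted theorem) to $h$. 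Without such a patch (or a mean-value theorem for bounded nonnegative multiplicative functions stated and proved), your part (i) is complete only under the extra hypothesis $|\nu| \leq 1$ --- which covers the M\"obius pairs of \cite{PS13} but not the generality of Theorem \ref{second_theorem}.
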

\begin{proof}
The proof is almost identical to that of \cite[Lemma 4.1]{PS13}, so we do not give the details. 
The only differences is that one needs to use Lemma \ref{powered_lem} instead of \cite[Lemma 2.3]{PS13}
and that in the proof of (ii) and (iii) one makes use of the boundedness of $\nu$.
\end{proof}

Now, suppose that $\delta := \inf_{n \in \supp(\nu)} |\nu(n)|$ is positive and $f$ is not identically zero.
We want to prove that the mean value of $|g|$ is positive.
Let $d$ be the least positive integer in $\supp(f)$.
In the proof of Theorem \ref{main_theorem} we have seen that a positive portion of $n \in \N$ has $d$ as their only divisor from $\supp(f)$ and satisfies $n / d \in \supp(\nu)$, so that $|g(n)| = |f(d)|\,|\nu(n/d)| \geq |f(d)| \,\delta$.
Let $\curly{A}$ denote the set of these integers $n$, then
\begin{equation*}
\frac1{x} \sum_{n \leq x} |g(n)| \geq \frac1{x} \sum_{\substack{n \leq x \\ n \in \curly{A}}} |g(n)| \geq |f(d)| \,\delta \, \frac{\curly{A}(x)}{x} > 0
\end{equation*}
for large $x$, since $\underline{\ds}(\curly{A}) > 0$.
Hence, the mean value of $|g|$ is positive.
This completes the proof.

\begin{rmk}
In Theorem \ref{second_theorem}, the existence of the mean value of $|g|$ is no longer guaranteed if the hypothesis of boundedness of $\nu$ is omitted.
For example, consider the arithmetic functions $\nu$ defined by $\nu(n) := n$ for all $n \in \N$ and $f = \epsilon$; it results that $|g|$ has not a finite mean value.
Furthermore, the positiveness of the mean value of $|g|$ is no longer guaranteed if the hypothesis $\inf_{n \in \supp(\nu)} |\nu(n)| > 0$ is omitted.
E.g., consider the arithmetic functions $\nu$ defined by $\nu(n) := 1/n$ for all $n \in \N$ and $f = \epsilon$, it results that $|g|$ has mean value zero.
\end{rmk}

\subsection*{Acknowledgements}
The author thanks Salvatore Tringali (LJLL, Universit\'e Pierre et Marie Curie) for his helpful proofreading and suggestions.

\bibliographystyle{amsalpha}

\begin{thebibliography}{Hua09}
\bibitem[Ben85]{benedicks85}
M. Benedicks, \emph{On {F}ourier transforms of functions supported on sets of finite {L}ebesgue measure}, J. Math. Anal. Appl. \textbf{106} (1985), 180--183.

\bibitem[Hal96]{hall96}
R. R. Hall, \emph{Sets of multiples}, Cambridge Tracts in Mathematics, vol. 118, Cambridge University Press, Cambridge, 1996.

\bibitem[Hua09]{chan09}
Chan Heng Huat, \emph{Analytic number theory for undergraduates}, Monographs in Number Theory, vol.~3, World Scientific, New Jersey, 2009.

\bibitem[Kra08]{krantz08}
S. G. Krantz, \emph{A guide to complex variables}, MAA problem books series, Mathematical Association of America, 2008.

\bibitem[Pol11]{pollack11}
P. Pollack, \emph{The {M}\"obius transform and the infinitude of primes}, Elem. Math. \textbf{66} (2011), 118--120.

\bibitem[PS13]{PS13}
P. Pollack and C. Sanna, \emph{Uncertainty principles connected with the {M}\"obius inversion formula}, Bull. Aust. Math. Soc. \textbf{FirstView} (2013), 1--13.

\bibitem[Ten95]{tenenbaum95}
G. Tenenbaum, \emph{Introduction to analytic and probabilistic number theory}, Cambridge Studies in Advanced Mathematics, vol.~46, Cambridge University Press, Cambridge, 1995.
\end{thebibliography}
\providecommand{\bysame}{\leavevmode\hbox to3em{\hrulefill}\thinspace}
\providecommand{\MR}{\relax\ifhmode\unskip\space\fi MR }

\providecommand{\MRhref}[2]{
  \href{http://www.ams.org/mathscinet-getitem?mr=#1}{#2}
}
\providecommand{\href}[2]{#2}

\end{document}